\documentclass[11pt]{article}
\usepackage{amsmath, amsfonts, amssymb}
\usepackage{mathtools}
\usepackage{graphicx, amsthm,color}
\usepackage{mathrsfs,dsfont}  
\usepackage{centernot}

\usepackage[T1]{fontenc}
\usepackage{fbb}

\usepackage{comment}
\usepackage[numbers,sort&compress]{natbib}
\usepackage{hyperref}

\usepackage{tikz}
\usepackage{tikz-3dplot}
\usetikzlibrary{calc,patterns,angles,quotes}
\usetikzlibrary{babel}
\usepackage{pgfplots}
\usepgfplotslibrary{patchplots}
\usetikzlibrary{backgrounds, intersections}
\usepgfplotslibrary{fillbetween}
\usetikzlibrary{arrows,automata}
\usepackage{subcaption}

\pgfplotsset{compat=1.18}
\usepgfplotslibrary{colorbrewer}

\newtheorem{theorem}{Theorem}[section]

\newtheorem{corollary}[theorem]{Corollary}
\newtheorem{proposition}[theorem]{Proposition}

\newtheorem{example}[theorem]{Example}
\newtheorem{claim}[theorem]{Claim}

\theoremstyle{definition}
\newtheorem{remark}[theorem]{Remark}

\usepackage[utf8]{inputenc}

\newcommand{\N}{\mathbb{N}}

\newcommand{\R}{\mathbb {R}}

\newcommand{\dist}{\textup{dist}}

\date{ \today }
\title{Sharp regularity results \\ for eikonal equations in Banach spaces} 
\author{\textsc{Tr\'i Minh L\^e \& Sebasti\'an Tapia-Garc\'ia}}

\begin{document}

% \maketitle

\begin{center}
{\LARGE Sharp regularity results \\ for eikonal equations in Banach spaces}
\end{center}
% \smallskip
\begin{center}
{\Large \textsc{Tr\'i Minh L\^e \& Sebasti\'an Tapia-Garc\'ia}}
\end{center}
\bigskip

\noindent\textbf{Abstract.} 
Viscosity solutions of eikonal equations need not enjoy regularity beyond Lipschitz continuity.
In contrast, differentiable solutions exhibit stronger structural properties.
In the Euclidean setting, building on observations of Caffarelli and Crandall (Comm. Partial Differential Equations \textbf{35} (2010), 391--414), one finds that differentiable solutions of the eikonal equation $\|Df \| \equiv 1 \,\, \text{in } \mathcal{U} \subset \R^d$ are of class $C^{1, 1}_{\mathrm{loc}}(\mathcal U)$. 
Moreover, if $\mathcal{U} = \mathbb{R}^d$, then $f$ must be affine. 
This manuscript revisits these phenomena in the setting of (finite-- and infinite--dimensional) Banach spaces.
Our results show how the geometry of the underlying Banach space determines the regularity and affineness of Gateaux--differentiable solutions of eikonal equations.
As a geometric application in finite dimensions, we characterize differentiable functions whose gradient ranges are constrained to boundaries of convex bodies.
\bigskip

\noindent\textbf{Keywords}: Eikonal equations, Banach spaces, uniform convexity, uniform smoothness, Minkowski gauge.

\vspace{0.6cm}

\noindent\textbf{AMS Classification}: \textit{Primary}: 35F21, 46B20, 52A20; 
\textit{Secondary}: 35R15, 46G05.

% \tableofcontents

%%%%%%%%%%%%%%%%%%%%%%%%%%%%%%%%%
\section{Introduction}

The eikonal equation is one of the simplest Hamilton–Jacobi equations, yet the structure of its differentiable solutions is far from elementary. 
In the Euclidean setting, differentiable solutions may exhibit unexpected higher regularity and entire solutions are forced to be affine.
It is not clear, however, whether this phenomenon survives beyond Euclidean geometry, particularly in infinite-dimensional spaces where local compactness is unavailable. 
This work aims to identify geometric assumptions under which such higher regularity and affinity remain valid.

\medskip

A classical approach to regularity for Hamilton--Jacobi equations $H(x, Du) = 0$ relies on semiconcavity estimates for viscosity solutions. 
Under suitable assumptions on the Hamiltonian $H$, applying such estimates to both a solution $u$ and $-u$ yields $C^{1,1}$ regularity, see, for instance, \cite{F_2003,R_2008} and \cite[Theorem~5.3.6 and Corollary~3.3.8]{CS_2004}.
In particular, the eikonal equation $\|Du\|\equiv 1$ is also closely connected with the distance functions, whose regularity is governed by the cut locus and the nearest-point projection, see \cite{LN_2005,KS_2023}.
At lower regularity, however, the singular set of a distance function may be large or even dense (see \cite{S_2021}), highlighting the exceptional rigidity of differentiable solutions.

\medskip

A central insight of Caffarelli and Crandall  \cite{CC_2010} is that differentiable solutions of the Euclidean eikonal equation possess straight characteristics along which the solution is affine and its gradient remains constant. 
This geometric property implies that every differentiable solution is of class $C^{1,1}_{\mathrm{loc}}$. 
In the entire-space setting, the existence of complete characteristic lines further forces the solution to be affine. 
More recently, Ignat provided a short and elementary proof of the  $C^{1,1}_{\mathrm{loc}}$ regularity result \cite{I_2025}.

\medskip 

In contrast, the Banach space setting remains much less understood. 
Besides the loss of local compactness in infinite dimensions, the Hamiltonian $H(x, p)=\|p\|_*^2-1$ may fail to satisfy the smoothness and convexity assumptions underlying the classical semiconcavity approach. 
We note that the $C^{1, 1}$ regularity via the semiconcavity technique has also recently been developed for Hamilton--Jacobi--Bellman equations in separable Hilbert spaces under suitable control-theoretic assumptions, see \cite{DFSW_2025}.
For another perspective on the interaction between Banach space geometry and the eikonal equation via infinite games and a different notion of solution, see~\cite{DM_2007}.
In this context, it is natural to ask whether regularity and affineness of solutions can instead be derived directly from the geometry of the norm. 
Our results show that uniform convexity and smoothness properties of the unit sphere provide precisely the relevant geometric structure.

%\textbf{Notation:} Along this manuscript, $(X,\|\cdot\|)$ denotes a Banach space and $X^*$ its dual space, which is equipped with the canonical dual norm $\|\cdot\|_*$. $S_X$ stands for the unit sphere of $X$. The Gateaux-differential of a function $f$ is denoted by $D_Gf$
\subsection{Main contributions}

Our main result is Theorem~\ref{thm.straightline}, which provides a key geometric ingredient to analyze solutions of eikonal equations.
It shows that, under uniform convexity, every point determines a unique characteristic direction along which the solution is locally affine.

\begin{theorem}~\label{thm.straightline}
Let $X$ be a uniformly convex Banach space and let $\mathcal{U}\subset X$ be a nonempty open set. 
Let $f:\mathcal{U}\to \R$ be a Gateaux-differentiable function such that   
\begin{equation}
	\| D_G f(x) \|_\ast = 1 \quad \text{ for every $x \in \mathcal{U}$}.
\end{equation}
Then, for every $x\in \mathcal{U}$, there exists a unique vector $y_x \in S_X$ such that $D_Gf(x)(y_x) = 1$ and 
\begin{equation}\label{straightline}
    f(x + ty_x)  = f(x)  + t \quad \text{ for every } t\in \R~\text{s.t. }[x,x+ty_x]\subset \mathcal{U}.
\end{equation}
In particular, if $\mathcal U = X$, the identity~\eqref{straightline} holds for every $t \in \R$. 
\end{theorem}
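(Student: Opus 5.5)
The argument has three parts: existence and uniqueness of $y_x$, a local form of \eqref{straightline} from a Lipschitz-plus-equality argument, and an extension to the whole segment by a continuation argument.

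\emph{Existence and uniqueness of $y_x$.} A uniformly convex space is reflexive by Milman--Pettis. Hence the functional $D_Gf(x)\in S_{X^*}$ attains its norm at some $y_x\in S_X$. Uniqueness comes from strict convexity. If $y_1\neq y_2$ both satisfied $D_Gf(x)(y_i)=1$, then $D_Gf(x)\bigl(\tfrac{y_1+y_2}{2}\bigr)=1$, so $\|\tfrac{y_1+y_2}{2}\|\ge 1$. This contradicts uniform convexity.

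\emph{Local step.} First, $f$ is $1$-Lipschitz on every convex subset of $\mathcal U$. Indeed, $t\mapsto f(a+t(b-a))$ is differentiable on $[0,1]$ with derivative $D_Gf(a+t(b-a))(b-a)$, which is at most $\|b-a\|$ in absolute value; then apply the mean value theorem.

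Fix $x$ and a ball $B(x,r)\subset\mathcal U$. Put $g(t)=f(x+ty_x)-f(x)-t$, so that $g(0)=0$, $g'(0)=0$ and $g\le 0$ for $t\ge 0$. I want to show $g\equiv 0$ on $[0,r)$. Suppose $g(s)<0$ for some small $s>0$, with $f(x+sy_x)=f(x)+s-\delta$ where $\delta>0$. Take a point $z$ near $x+sy_x$ at which the slope is almost $1$ in a direction close to $y_x$. The Lipschitz bound forces $f(z)-f(x)\le\|z-x\|$, and I aim for a contradiction from this.

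More precisely, I compare two evaluations. Differentiability at $x$ gives
\begin{equation*}
f(x+h)=f(x)+D_Gf(x)(h)+o(\|h\|)
\end{equation*}
along each direction. For $h$ with $D_Gf(x)(h)\approx\|h\|$, uniform convexity forces $h/\|h\|$ to be close to $y_x$; this is the quantitative form of uniqueness through the modulus $\delta_X$. Now take the point $w=x+sy_x$. Because $D_Gf(w)$ has norm $1$, there is $u\in S_X$ with $D_Gf(w)(u)\approx 1$. Moving backwards from $w$ along $-u$ decreases $f$ at rate about $1$.

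The cleanest route is the Caffarelli--Crandall argument in Ignat's form: show that $t\mapsto f(x+ty_x)$ has derivative identically $1$. Let $\varphi(t)=D_Gf(x+ty_x)(y_x)$, and let $y(t)$ be the maximizing direction at $x+ty_x$. The $1$-Lipschitz bound together with $f(x+ty_x)-f(x)=\int_0^t\varphi$ and chaining through intermediate points gives, for a midpoint $m$ of two points $p,q$ on the line,
\begin{equation*}
f(q)-f(p)\le\|q-p\|.
\end{equation*}
Equality $f(q)-f(p)=\|q-p\|$ in this bound forces the segment $[p,q]$ to be characteristic. Via uniform convexity, near-equality forces $\varphi$ to be near $1$.

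I expect the main obstacle to be this propagation. The key inequality I would try is the following. For $0<t$ small, take $p=x-\varepsilon y(0)$ and use $f(x)-f(p)=\varepsilon+o(\varepsilon)$, which is differentiability at $x$. The $1$-Lipschitz property gives $f(x+ty)-f(p)\le\|ty+\varepsilon y_x\|$ for any unit $y$. Choosing $y=y(t')$ at later points and moving forward gives $f(x+ty)-f(p)\ge t+\varepsilon-o(\varepsilon)-o(t)$. Together these yield $\|ty+\varepsilon y_x\|\ge t+\varepsilon-o(\cdot)$. By uniform convexity this forces $\|y-y_x\|$ to be small, and as the scales are iterated it should force $y=y_x$, i.e.\ $g\equiv0$ near $0$. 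Handling the two $o(\cdot)$ terms uniformly, with only Gateaux and not Fréchet differentiability, is the delicate point. I would take one-dimensional restrictions to the plane spanned by $y_x$ and $y$, where the little-$o$ is honest.

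\emph{Extension.} Let $T=\sup\{t:\ g\equiv0\text{ on }[0,t],\ [x,x+ty_x]\subset\mathcal U\}$. By continuity $g(T)=0$ whenever the segment up to $T$ lies in $\mathcal U$. Then $D_Gf(x+Ty_x)(y_x)=1$, since the derivative along the line is $1$ from the left and Gateaux differentiability makes the one-sided derivatives agree. By uniqueness, $y_{x+Ty_x}=y_x$, so the local step restarts at $x+Ty_x$ and contradicts maximality unless the segment leaves $\mathcal U$. Negative $t$ is handled symmetrically: apply the same argument to $-f$, whose characteristic direction is $-y_x$. The case $\mathcal U=X$ then follows immediately.
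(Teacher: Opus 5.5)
The existence and uniqueness of $y_x$ (Milman--Pettis plus strict convexity), the $1$-Lipschitz bound on convex subsets, and the continuation step (left derivative $1$ at $x+Ty_x$, Gateaux differentiability, uniqueness, restart, and $-f$ with $-y_x$ for $t<0$) are all fine. The gap is the local step, which is the whole content of the theorem; what you give there is a heuristic, not a proof. Gateaux differentiability gives only first-order information at each point separately, and you never supply a mechanism that turns it into exact affineness on a fixed interval.

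Your key inequality $f(x+ty)-f(p)\ge t+\varepsilon-o(\varepsilon)-o(t)$ for $y=y(t')$ needs $f(x+ty)-f(x)\ge t-o(t)$, i.e.\ $D_Gf(x)(y(t'))\approx 1$. Differentiability at $x+t'y_x$ says nothing about increments of $f$ based at $x$. So this presupposes that $D_Gf$ is nearly constant along the segment, which is a consequence of the theorem (compare Corollary~\ref{corol.regu+linear}(ii)), not an input. The difficulty is therefore not the uniformity of the $o(\cdot)$ terms, as you suggest, but that the leading term is unjustified. The same objection applies to your first attempt: nothing makes a near-norming direction of $D_Gf(z)$ close to $y_x$. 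Moreover, even granting $\|y(t)-y_x\|\to 0$, you would only get $\varphi(t)\to 1$, hence $g(t)=o(t)$, not $g\equiv 0$.

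The missing idea is a global property on balls: if $\overline{B}(x,r)\subset\mathcal U$, then $\sup_{\overline{B}(x,r)} f=f(x)+r$ and $\inf_{\overline{B}(x,r)} f=f(x)-r$. The upper bound is the Lipschitz estimate. The lower bound uses the eikonal equation at every point of the ball, not only at $x$. The paper proves it with a transfinite chain $x_0=x$, $x_{\lambda+1}=x_\lambda+t_\lambda p_\lambda$ with $f(x_{\lambda+1})-f(x_\lambda)>(1-\varepsilon)t_\lambda$, which is possible since $\|D_Gf(x_\lambda)\|_*=1$. At limit ordinals it passes to limits: the total length is at most the total increase of $f$ divided by $1-\varepsilon$, so the net is Cauchy. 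The chain runs until it reaches the sphere, where $f\ge f(x)+(1-\varepsilon)r$. Ekeland's principle applied to $-f+(1-\varepsilon)\|\cdot-x\|$ on $\overline{B}(x,r)$, with parameter smaller than $\varepsilon$, would do the same job.

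Uniform convexity then enters. For near-maximizers $x+ru_n$ and near-minimizers $x+rv_k$ on the sphere, the Lipschitz bound gives $\|u_n-v_k\|\to 2$, hence $\|u_n+v_k\|\to 0$. So $(u_n)$ is Cauchy, and the maximum and minimum are attained at antipodal points $x\pm ry$. The $1$-Lipschitz bound along the segment joining them forces $f(x+ty)=f(x)+t$ for $|t|\le r$. Hence $D_Gf(x)(y)=1$, and $y=y_x$ by your uniqueness argument. So rather than fixing $y_x$ first and trying to prove affineness along it, produce a slope-one segment through $x$ from this extremal problem and identify its direction afterwards.
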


As a consequence, we obtain the following regularity results. 
\begin{corollary}\label{corol.regu+linear}
    In the context of Theorem~\ref{thm.straightline}, the following assertions hold true:
    \begin{itemize}
        \item[(i)] Assume in addition that $\mathcal U = X$ and $\|\cdot\|$ is Gateaux-differentiable at $y_{\overline x}$ for some $\bar x \in X \setminus \{ 0 \}$.
        Then, $f$ is affine. 
        \item[(ii)] Assume in addition that $\| \cdot \| \in C^1(X \setminus \{ 0 \})$. Then $f$ is Fréchet-differentiable and of class $C^1$. 
        %Furthermore, if $\mathcal{U} = X$, then $f$ is affine.
        \item[(iii)] Assume in addition that $\| \cdot \| $ is uniformly smooth. Then $f \in C^{1, \omega}_{\mathrm{loc}} (\mathcal{U})$, where $\omega$ is the concave modulus of continuity of $D\|\cdot\|:S_X\to X^*$. 
        In particular, if $X$ is a Hilbert space, then $f \in C^{1, 1}_{\mathrm{loc}}(\mathcal{U})$.
    \end{itemize}
\end{corollary}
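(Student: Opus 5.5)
The plan is to first upgrade Theorem~\ref{thm.straightline} to the statement that $D_Gf$ is determined by $y_x$, and then to compare $f$ with the $1$-Lipschitz cones centred on points of the characteristic segment. Since $\|D_Gf\|_*=1$, $f$ is $1$-Lipschitz on every convex subset of $\mathcal U$ (mean value theorem along segments). Fix $x$, write $y=y_x$, and let $z=x+ty$ lie on the characteristic segment. Then \eqref{straightline} gives $D_Gf(z)(y)=1$, so by uniqueness $y_z=y_x$. Moreover, $D_Gf(z)$ is a norm-one functional attaining its norm at $y$. Hence whenever $\|\cdot\|$ is Gateaux differentiable at $y$, we get $D_Gf(z)=J(y):=D\|\cdot\|(y)$, and in particular $D_Gf(x)=J(y_x)$.

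\emph{(i).} Put $y=y_{\bar x}$, $\varphi=J(y)$ and $w=z-\bar x$ for arbitrary $z\in X$. By the Lipschitz property and \eqref{straightline}, for all $t>0$,
\[
f(z)\ge f(\bar x+ty)-\|ty-w\| = f(\bar x)+t-t\|y-w/t\| ,
\]
\[
f(z)\le f(\bar x-ty)+\|w+ty\| = f(\bar x)-t+t\|y+w/t\| .
\]
Gateaux differentiability of the norm at $y$ gives $t(\|y\pm w/t\|-1)\to \pm\varphi(w)$ as $t\to\infty$. Letting $t\to\infty$ yields $f(z)=f(\bar x)+\varphi(z-\bar x)$, so $f$ is affine. (The hypothesis $\bar x\neq 0$ plays no role in this argument.)

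\emph{(ii).} We have $D_Gf(x)=J(y_x)$, and $J$ is continuous on $S_X$. A Gateaux derivative that is norm-continuous yields Fréchet $C^1$, so it suffices to show that $x\mapsto y_x$ is continuous. Take $x_n\to x$, write $y_n=y_{x_n}$ and $\varepsilon_n=\|x_n-x\|$, and fix $r>0$ small so that the relevant segments lie in $\mathcal U$. By \eqref{straightline} at $x_n$ and the Lipschitz property,
\[
f(x+ry_n)\ge f(x_n+ry_n)-\varepsilon_n=f(x_n)+r-\varepsilon_n\ge f(x)+r-2\varepsilon_n .
\]
Also $f(x-ry_x)=f(x)-r$, so
\[
r\|y_n+y_x\|\ge f(x+ry_n)-f(x-ry_x)\ge 2r-2\varepsilon_n .
\]
Thus $\|(y_n+y_x)/2\|\to1$, and uniform convexity forces $y_n\to y_x$.

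\emph{(iii).} Here I would quantify the sandwich from (i) locally. Let $\rho$ be the modulus of smoothness, so that $\big|\|y+h\|-1-J(y)h\big|\le\rho(\|h\|)$ uniformly in $y\in S_X$, with $\rho(\tau)\lesssim\tau\,\omega(\tau)$. Fix a ball where the characteristic segments of length $r$ stay in $\mathcal U$. Repeating the two cone inequalities with $t=r$ gives a two-sided Taylor bound
\[
|f(x+w)-f(x)-Df(x)w|\le r\,\rho(\|w\|/r)\lesssim \|w\|\,\omega(\|w\|/r),
\]
uniformly in $x$. This is exactly a simultaneous semiconcavity and semiconvexity bound with modulus $\omega$. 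The standard argument then gives the Hölder-type modulus for $Df$: for $x,z$ with $s=\|x-z\|$ and any $\|h\|=s$, expand $f(z+h)$ both from $x$ and from $z$, and expand $f(z)$ from $x$. This gives $|(Df(x)-Df(z))h|\lesssim s\,\omega(Cs/r)$, hence $\|Df(x)-Df(z)\|_*\lesssim\omega(C\|x-z\|)$, using concavity of $\omega$ to absorb constants. For a Hilbert space $\rho(\tau)\le\tau^2/2$, which gives $C^{1,1}_{\mathrm{loc}}$.

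The main obstacle is making (iii) uniform: the admissible length $r$ must be chosen uniformly on a neighbourhood (by taking a ball of radius $2r$ inside $\mathcal U$), and the translation from $\rho$ to $\omega$ needs some care.
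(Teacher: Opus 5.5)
Your proposal is correct. Parts (i) and (iii) essentially reproduce the paper's arguments. Part (i) is Crandall's two-sided cone comparison with $\bar x\pm ty_{\bar x}$ and $t\to+\infty$, and you are right that $\bar x\neq 0$ is never used. Part (iii) compares $f$ with the cones at $x\pm\tau y_x$ and expands the norm to first order with remainder $\lesssim\|h\|\omega(\|h\|)$; your bound $\rho(\tau)\lesssim\tau\omega(\tau)$ for small $\tau$ plays the role of \eqref{taylor_norm}. It then finishes with the three-remainder identity of Proposition~\ref{prop.C1ome}.

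The genuine difference is in (ii). The paper first obtains Fr\'echet differentiability at each point from Fitzpatrick's theorem \cite[Theorem 2.4]{F_1984}. It then proves continuity of $Df$ by contradiction, playing $y_n=x_n-t_nz_n\in S_X$ against $z_\infty$ through uniform convexity. You instead prove directly that $x\mapsto y_x$ is continuous, via $\|y_{x_n}+y_x\|\ge 2-2\|x_n-x\|/r$. You then get norm-continuity of $D_Gf=J(y_{(\cdot)})$ from continuity of $J$ on $S_X$, and upgrade this to Fr\'echet $C^1$ by the mean value theorem along segments. Your version is self-contained, with no external differentiability theorem. It is also quantitative: it gives $\delta_X(\|y_{x'}-y_x\|)\le\|x'-x\|/r$, with $\delta_X$ the modulus of convexity, so $x\mapsto y_x$ is in fact locally uniformly continuous. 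It also shows that Fitzpatrick's theorem is dispensable under the hypotheses of (ii). What the paper's route buys is a genuinely pointwise statement: Fitzpatrick's theorem makes $f$ Fr\'echet differentiable at $x$ as soon as the norm is Fr\'echet differentiable at the single point $y_x$. That step needs neither continuity of $J$ nor any limiting argument in $x$.
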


\begin{remark}
The conclusion of Theorem~\ref{thm.straightline} does not hold in nonreflexive Banach spaces.
Indeed, thanks to James's theorem \cite{J_1964}, every nonreflexive Banach space $X$ admits a linear map $x^*\in S_{X^*}$ that does not attain its supremum in the unit ball $\overline{B}_X$.
Therefore, for $f=x^*$ and $\varepsilon>0$, there is no $y_0\in S_X$ such that $f(ty_0)=t$ for all $|t| < \varepsilon$.
On the other hand, differentiability of the norm is a necessary condition for the validity of Corollary~\ref{corol.regu+linear} (and the subsequent Proposition~\ref{prop.fini-dim}). See Example~\ref{ex: nonlinear solution}.
\end{remark}

\begin{remark}
\emph{(i)}
The regularity in Corollary~\ref{corol.regu+linear}-\emph{(iii)} is sharp.
Indeed, $f(x) = \| x \|$ solves $\| Df \|_\ast \equiv 1$ in $X \setminus \{ 0 \}$; hence, if the norm is $C^{1,\beta}$, with $\beta\in (0,1)$, but not $C^{1,\gamma}$ for $\gamma>\beta$, there is a solution of an eikonal equation that is exactly $C^{1,\beta}$ smooth.
\medskip

\emph{(ii)}
We cannot expect $C^2$ regularity for solutions in general, even if the norm is $C^\infty$-smooth on $X \setminus \{0\}$. 
Indeed, in the euclidean setting, consider the set $S\subset\R^2$ given by $S:=[0,1]\times\{0\}$ and the distance function $f:=d(\cdot,S)$. It follows that $f\in C^1$ on $\R^2\setminus S$ and $\|Df(x)\|=1$ for all $x\in \R^2\setminus S$. However, $f$ is not twice differentiable on $\{0\}\times(0,+\infty)$.
\end{remark}

\begin{remark}
\emph{(i)}
For the entire solution, Corollary~\ref{corol.regu+linear}--\emph{(i)} may be compared with \cite[Theorem 3]{T_2019}, where affine rigidity is proved for $C^1$ solutions in reflexive locally uniformly convex Banach spaces with Fr\'echet differentiable norm on $X \setminus \{ 0 \}$.
Under the stronger assumption of uniform convexity, our result starts from mere Gateaux-differentiable solutions and requires only Gateaux-differentiability of the norm at some point.

\medskip

\emph{(ii)} 
The results in Corollary~\ref{corol.regu+linear}--\emph{(ii)(iii)} complement known regularity results for distance functions, since they provide a natural class of solutions to eikonal equations.
In uniformly convex and uniformly smooth Banach spaces with moduli of power type, differentiability is closely related to prox--regularity and yields higher regularity, see e.g.,~\cite[Theorem 4.9]{BTZ_2006} and~\cite[Theorem 2.3]{BTZ_2010}.
In finite--dimensional Minkowski spaces, the distance from an arbitrary closed set is of class $C^{1, 1}_{\mathrm{loc}}$ away from the closure of its singular set, see~
\cite[Theorem 1.1 and 1.4]{KS_2023} and also \cite{NT_2025}.
Corollary~\ref{corol.regu+linear}-\emph{(ii)(iii)} not only applies to solutions of eikonal equations, but also yields regularity without imposing any power-type assumptions on moduli of smoothness and convexity of the space.
\end{remark}

In the finite-dimensional setting, we can drop the assumption of uniform convexity. 
Indeed, we can circumvent it with a compactness argument.
Observe that the following result does not fall within the classical approach using semiconcave estimates to obtain regularity results for Hamilton-Jacobi equations, see e.g., \cite{CS_2004, F_2003, R_2008}.
Indeed, in our case, the Hamiltonian $H(x,p)=\|p\|_*^2-1$ is typically not $C^1$-smooth in the $p$ variable, because the norm $\|\cdot\|$ is not assumed to be uniformly convex. Since in the finite-dimensional setting Gateaux-differentiability and Fréchet-differentiability coincide for Lipschitz functions, we just say that a function is differentiable.

\begin{proposition}\label{prop.fini-dim}
    Let $\| \cdot \|$ be a differentiable norm on $\R^d\setminus\{0\}$ and let $\mathcal U \subset \R^d$ be a nonempty open set.
    Let $f: \mathcal U \to \R$ be a differentiable function such that 
    \[
        \| Df(x) \|_\ast = 1 \quad \text{ for every } x \in \mathcal U.
    \]
    Then, the following assertions hold true:
    \begin{itemize}
        \item[(i)]$f \in C^{1, \omega}_\mathrm{loc}(\mathcal U)$, where $\omega$ is the concave modulus of continuity of $D \| \cdot \|$ on the unit sphere.

        \item[(ii)] If $\mathcal U  = \R^d$, then $f$ is affine.
    \end{itemize} 
\end{proposition}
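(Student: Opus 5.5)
The plan is to reduce to the mechanism behind Theorem~\ref{thm.straightline} and replace uniform convexity by compactness of the unit sphere of $\R^d$. The key step is a finite-dimensional version of the straight-line property. For every $x\in\mathcal U$ there should exist $y_x\in S_X$ with $Df(x)(y_x)=1$ and
\[
f(x+ty_x)=f(x)+t \quad\text{whenever } [x,x+ty_x]\subset\mathcal U.
\]
Uniqueness of $y_x$ is not needed; existence of one such direction is what matters.

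\textbf{Step 1: a local straight line.} Fix $x$ and a ball $B(x,r)\subset\mathcal U$. Since $\|Df\|_*\le 1$, $f$ is $1$-Lipschitz on the ball. Consider the sphere $S(x,\rho)$ for $\rho<r$. The function $f$ attains its maximum on this compact sphere at some $z_\rho=x+\rho y_\rho$, and I expect this maximum to equal $f(x)+\rho$.
\begin{itemize}
\item The upper bound $\le f(x)+\rho$ comes from the Lipschitz bound.
\item For the equality I would argue by a comparison or ODE argument. The function $g(\rho)=\max_{S(x,\rho)}f$ is Lipschitz in $\rho$. At a maximizer $z$ on $S(x,\rho)$, a Lagrange-multiplier argument says $Df(z)$ is a positive multiple of a supporting functional of the ball at $z$. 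Since $\|Df(z)\|_*=1$, there is a direction $y$ with $Df(z)(y)=1$ pointing outward from the ball. Hence $g'(\rho)\ge 1$ in the Dini sense, so $g(\rho)=f(x)+\rho$.
\end{itemize}
Then $f(z_\rho)-f(x)=\rho=\|z_\rho-x\|$, so the $1$-Lipschitz bound forces $f(x+sy_\rho)=f(x)+s$ for $s\in[0,\rho]$. Otherwise splitting the segment at $x+sy_\rho$ would contradict the bound. Differentiating at $s=0$ gives $Df(x)(y_\rho)=1$.

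To get a full two-sided segment, apply the same argument to $-f$, which yields $w$ with $f(x-sw)=f(x)-s$. Then
\[
f(x+\rho y)-f(x-\rho w)=2\rho, \qquad \|x+\rho y-(x-\rho w)\|\le 2\rho,
\]
so the Lipschitz bound forces $y=w$ when the norm is strictly convex. Without strict convexity I would instead take $y_x$ as any limit, by compactness, and extend along segments by a continuation argument: the set of admissible $t$ is closed, and re-apply the construction at the endpoint.

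\textbf{Step 2: regularity (i).} Differentiability of the norm makes the supporting functional unique. From $f(x+ty_x)=f(x)+t$, the derivative $Df(x)$ is the unique norming functional of $y_x$, namely $D\|\cdot\|(y_x)$. Then I would follow the proof of Corollary~\ref{corol.regu+linear}(iii), likely via a quantitative estimate of the form
\[
f(z)\ge f(x)+Df(x)(z-x)-C\,\|z-x\|\,\omega(\|z-x\|/r).
\]
This comes from comparing $f$ with the affine function along the line through $x-ry_x$. Indeed, $f(z)\ge f(x-ry_x)+\dots$ combined with the Lipschitz bound gives $f(z)\ge f(x)-r+\|z-x+ry_x\|$. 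Expanding the norm with modulus $\omega$ yields the one-sided bound, and symmetrically using $x+ry_x$ gives the reverse bound. Together these give semiconvexity and semiconcavity with modulus $\omega$, hence $C^{1,\omega}_{\rm loc}$.

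\textbf{Step 3: affineness (ii).} With $\mathcal U=\R^d$, the same two-sided bounds hold for every $r$. Letting $r\to\infty$ in $\|z-x\|\,\omega(\|z-x\|/r)$ shows $f(z)=f(x)+Df(x)(z-x)$, so $f$ is affine.

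\textbf{Main obstacle.} I expect the hardest part to be Step 1 without strict or uniform convexity, namely producing a genuine line through $x$ in both directions with a single $y$. My plan is to first use the two-sided maximization on small spheres and extract a common direction by compactness. If that fails, I would route around it by noting that the bounds in Steps 2 and 3 only need one-sided segments $x\pm ry_x^{\pm}$ with $Df(x)(y^\pm)=\pm1$. Differentiability of the norm then forces $y^+=y^-$, since both are normed by the same functional.
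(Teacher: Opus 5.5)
Your proposal is correct in substance once three small repairs are made (listed in the second paragraph). It departs from the paper mainly in Steps~1 and~3.

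\textbf{Comparison with the paper.} For the local rays, the paper reruns the transfinite-induction argument of Theorem~\ref{thm.straightline} in $\overline{B}(x,r)$. It then uses compactness of $S^{d-1}$ to attain the supremum, producing $y_x^{\pm}$ with $f(x+ty_x^+)=f(x)+t$ and $f(x-ty_x^-)=f(x)-t$ on $[0,r]$. Your maximization over spheres, combined with a Dini-derivative argument for $g(\rho)=\max_{S(x,\rho)}f$, is a valid finite-dimensional substitute that avoids transfinite induction. Your Step~2 is then the paper's proof of~(i).

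For~(ii), the paper takes accumulation points $\widehat{x}_\infty,\widehat{y}_\infty$ of $x_r/r$, $y_r/r$ to get full rays. It then chooses $u^*$ with $u^*(\widehat{x}_\infty)=1=-u^*(\widehat{y}_\infty)$ and checks $f=u^*$ on $\ker u^*$, then on the points $z+t\widehat{x}_\infty$ and $z+t\widehat{y}_\infty$ for $z\in\ker u^*$, $t\ge0$. Your Step~3 instead fixes $x$ and lets $r\to\infty$ directly in the first-order estimate, with directions $y_r^{\pm}$ that vary with $r$. This works because $J(y_r^{\pm})=Df(x)$ for every $r$, and the remainder is scale invariant and uniform in the direction: for $y\in S^{d-1}$ and $\|h\|<r$,
\[
\|ry+h\|-r-J(y)(h)=r\bigl(\|y+h/r\|-1-J(y)(h/r)\bigr)\le \|h\|\,\widehat{\omega}(2\|h\|/r),
\]
and $\widehat{\omega}(t)\to 0$ as $t\to0^+$ by uniform continuity of $J$ on the compact sphere. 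This is shorter than the paper's argument. It is really Crandall's argument from Corollary~\ref{corol.regu+linear}(i), with compactness compensating for the direction no longer being fixed.

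\textbf{Repairs needed.}
\begin{enumerate}
\item A Lagrange multiplier at a maximizer $z$ of $f$ on $S(x,\rho)$ only gives $Df(z)=\pm J(z-x)$. The sign comes from the fact that $f$ has no critical points. Hence $\max_{\overline{B}(x,\rho)}f$ is attained on $S(x,\rho)$, and every maximizer on the sphere also maximizes over the ball. So $Df(z)(v)\le 0$ whenever $J(z-x)(v)<0$, which forces $Df(z)=J(z-x)$.
\item The last sentence of your ``main obstacle'' paragraph is false. Differentiability of the norm gives a unique norming functional for each $y$, not a unique $y$ normed by each functional; the latter is strict convexity. For instance, all points of a flat face of a smooth unit ball share the same $J$. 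Drop that sentence, together with the vague continuation argument for a two-sided line. Steps~2 and~3 only use $J(y_x^+)=J(y_x^-)=Df(x)$, which is exactly what the paper uses.
\item In Step~2 the roles are swapped. The backward point gives the upper bound $f(z)\le f(x)-r+\|z-x+ry_x^-\|$, and the forward point $x+ry_x^+$ gives the lower bound. Correspondingly, $Df(x)(y_x^-)=+1$, not $-1$.
\end{enumerate}
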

%\begin{remark}\label{rem.asymmetric} \ST{In fact, not so clear.}
%    In fact, Proposition~\ref{prop.fini-dim}~$ii)$ holds true in the more general framework of Minkowski gauges (or asymmetric norms). 
 %   Indeed, if $Q\subset \R^d$ is a compact convex body, with $0\in \mathrm{int}\,Q$ and smooth boundary $\partial Q$, and $f:\R^d\to\R$ is a differentiable function such that $\rho_Q^*(D f(x))=1$ for all $x\in \R^d$, where \[\rho_Q^*(z):=\sup_{x\in Q}\langle z,x\rangle,\quad\text{for all }z\in \R^d,\] then $f$ is affine.
%\end{remark}
On the other hand, the symmetry of the norm can be dropped, and it can be shown that the statement of Proposition~\ref{prop.fini-dim} $(ii)$ holds in the more general framework of Minkowski gauges, see Proposition~\ref{prop.asym}.
This new formulation readily implies the following result.

\begin{corollary}\label{cor.convexaffine}
    Let $C\subset \R^d$ be a compact convex body. Then, the following assertions are equivalent.
    \begin{itemize}
        \item[(1)] $\partial C$ does contain a nontrivial segment. 
        \item[(2)] There is a non--affine differentiable map $f:\R^d\to \R$ such that $\nabla f(\R^d)\subset \partial C$.  
    \end{itemize}
\end{corollary}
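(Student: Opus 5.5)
We prove the two implications separately. The direction (2)$\Rightarrow$(1) is an application of Proposition~\ref{prop.asym}, the Minkowski-gauge version of Proposition~\ref{prop.fini-dim}~(ii). The direction (1)$\Rightarrow$(2) is an explicit construction.

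\textbf{Step 1: reduction to a gauge.} For (2)$\Rightarrow$(1) we argue by contraposition: assume $\partial C$ contains no nontrivial segment, i.e.\ $C$ is strictly convex, and let $f$ be differentiable with $\nabla f(\R^d)\subset\partial C$. To apply Proposition~\ref{prop.asym} we need $0$ in the interior of the relevant body. If $0\notin \mathrm{int}\, C$, pick $p\in\mathrm{int}\, C$ and set $g(x):=f(x)-\langle p,x\rangle$. Then $\nabla g(\R^d)\subset \partial(C-p)$, $0\in\mathrm{int}(C-p)$, and $g$ is affine iff $f$ is. So we may assume $0\in\mathrm{int}\,C$.

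\textbf{Step 2: duality and application of Proposition~\ref{prop.asym}.} Let $\rho_C$ be the Minkowski gauge of $C$, so that $\partial C=\{\rho_C=1\}$. Write $Q:=C^\circ$ for the polar body, so that $\rho_C=\rho_Q^*$, the support function of $Q$. Then the hypothesis reads $\rho_Q^*(\nabla f(x))=1$ for all $x\in\R^d$. Strict convexity of $C$ corresponds, by polar duality, to smoothness of $\partial Q$, i.e.\ to differentiability of the gauge of $Q$ away from $0$. This is exactly the analogue of the differentiable-norm hypothesis in Proposition~\ref{prop.fini-dim}. Proposition~\ref{prop.asym} therefore yields that $f$ is affine, contradicting (2).

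\textbf{Step 3: construction for (1)$\Rightarrow$(2).} Suppose $[a,b]\subset\partial C$ with $a\neq b$. Pick a nonconstant differentiable $\phi:\R\to[0,1]$, e.g.\ $\phi(s)=\tfrac12(1+\sin s)$. Define
\[
f(x):=\langle a,x\rangle+\int_0^{\langle b-a,x\rangle}\phi(s)\,ds .
\]
Then
\[
\nabla f(x)=a+\phi(\langle b-a,x\rangle)(b-a)\in[a,b]\subset\partial C.
\]
Moreover $\nabla f$ is not constant, because $\phi$ is nonconstant and $x\mapsto\langle b-a,x\rangle$ is onto $\R$. Hence $f$ is differentiable and not affine.

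\textbf{Main obstacle.} The delicate point is Step 2: checking that the hypotheses of Proposition~\ref{prop.asym} match exactly. Concretely, one must verify that strict convexity of $C$ is precisely what is required there (smooth $\partial Q$ for the body whose support function is $\rho_C$). If Proposition~\ref{prop.asym} is instead stated directly in terms of the gauge, one must check the orientation of the polar duality. Either way this rests on the standard fact that $\partial C$ contains no segment iff the polar $C^\circ$ has a unique supporting hyperplane at each boundary point.
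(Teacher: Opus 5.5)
Your proof is correct and follows the paper's argument. For (2)$\Rightarrow$(1) the paper likewise translates by an interior point $\bar x$, so that $g=f-\langle \bar x,\cdot\rangle$ satisfies $Dg(\R^d)\subset\partial(C-\bar x)$, and applies Proposition~\ref{prop.asym}. For (1)$\Rightarrow$(2) it uses the same construction $f(z)=\langle \frac{x+y}{2},z\rangle+\phi(\langle x-y,z\rangle)$ with $\phi'\in[-1/2,1/2]$, which is your $\int_0^{\langle b-a,x\rangle}\phi$ up to reparametrization. The ``main obstacle'' you flag is moot: Proposition~\ref{prop.asym} is stated directly under the hypothesis that $\partial K$ contains no nontrivial segment, and the polar-duality check is carried out inside its proof. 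After your translation it therefore applies verbatim.
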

%\begin{example} (nonsmooth norm and nonlinear solutions) %\normalfont
%    If one considers a nonsmooth norm, then there may exist nonlinear $C^\infty$ solutions to the equation $\| Df \|_* \equiv 1$.
%    Indeed, consider $\mathbb R^2$ with $1$-norm (and thus $(\mathbb{R}^2)^*$ is equipped with the $\infty$-norm). 
%    Fix $\varphi \in C^\infty(\mathbb R)$ (or even just differentiable, not $C^1$) such that $\varphi'(\mathbb R) \subset [-1, 1]$.
%    Set $f(x, y) = x + \varphi(y)$ for every $x, y \in \mathbb R$.
%    A direct computation gives $f_x = 1 $ and $f_y = \varphi'(y)$ and hence $\|Df\|_*=\max\{|f_x|,~|f_y|\} = 1$ in $\mathbb R^2$.
%    For instance, one can take $\varphi(y) = \sin(y)$.
%\end{example}

The last result that follows from our technique is a representation formula for solutions.
When the norm is not Gateaux-differentiable, we cannot, in general, conclude that entire solutions are affine. 
Nevertheless, the following corollary shows that such solutions are signed distance functions to each of their level sets. 
This representation formula has already been discussed in \cite{CC_2010} in the finite-dimensional setting.
For a function $f:\mathcal{U}\to \R$ and $r\in \R$ we write $[f=r]:=\{x\in \mathcal{U}:~f(x)=r\}$.
\begin{corollary}\label{corol.dist.2} 
 In the context of Theorem~\ref{thm.straightline}, let $x\in \mathcal{U}$ and $r< f(x)$ such that there exists ${z_x\in [f=r]}$ satisfying
 \begin{equation}\label{eq.cor.dist}
   \|x-z_x\|= \mathrm{dist}(x, [f=r])\quad\text{and}\quad [x,z_x]\subset \mathcal{U}.  
 \end{equation}
 Then, 
\begin{align*}
    & \, f(x) = r + \mathrm{dist}(x, [f = r]).
\end{align*} 
In particular, if $\mathcal{U}=X$, then for any $x\in X$ and $r\in \R$
\begin{align*}
    & \, f(x) = r + \mathrm{dist}(x, [f = r]), \quad \text{ if } f(x) \geq r, \\
    & \, f(x) = r - \mathrm{dist}(x, [f = r]), \quad \text{ if } f(x) \leq r.
\end{align*}   
\end{corollary}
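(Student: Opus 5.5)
Two inequalities give the representation formula. The upper bound $f(x)-r\le \mathrm{dist}(x,[f=r])$ comes from the $1$-Lipschitz property. The lower bound comes from following the straight characteristics of Theorem~\ref{thm.straightline} backwards from $x$, until the value $r$ is reached, along a segment of length exactly $f(x)-r$.

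For the upper bound, set $z:=z_x$. Since $[x,z]\subset\mathcal U$, the function $g(t):=f(z+t(x-z))$ is differentiable on $[0,1]$ with $g'(t)=D_Gf(z+t(x-z))(x-z)$. Because $\|D_Gf\|_*=1$, we get $|g'(t)|\le\|x-z\|$. The mean value theorem then gives $f(x)-r=g(1)-g(0)\le \|x-z\|=\mathrm{dist}(x,[f=r])$.

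For the lower bound, I apply Theorem~\ref{thm.straightline} at $x$ with negative $t$: $f(x-ty_x)=f(x)-t$ as long as $[x,x-ty_x]\subset\mathcal U$. If the whole segment $[x,x-(f(x)-r)y_x]$ lies in $\mathcal U$, then $w:=x-(f(x)-r)y_x\in[f=r]$. Hence $\mathrm{dist}(x,[f=r])\le \|x-w\|=f(x)-r$, and we are done.

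The main obstacle is that this segment might leave $\mathcal U$. To handle it, I would use the nearest point $z$ and make the characteristic argument local near $x$. Write $d:=\|x-z\|$ and consider $\phi(s):=f(x-sy_x)$ for small $s\ge 0$. Theorem~\ref{thm.straightline} gives $\phi(s)=f(x)-s$ for all small $s$, since $\mathcal U$ is open. For each point $x_s:=x-sy_x$, the upper bound applied at $x_s$ gives $f(x_s)-r\le \mathrm{dist}(x_s,[f=r])$. The triangle inequality gives $\mathrm{dist}(x_s,[f=r])\le s+d$.

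It is cleaner to argue by contradiction. Suppose $f(x)-r<d$. Then for every $p\in[x,z]$ we have $f(p)-r\le \|p-z\|$ by the upper bound along $[p,z]\subset[x,z]$, and $\mathrm{dist}(p,[f=r])=\|p-z\|$, since $z$ is also a nearest point to $p$. Consider $h(t):=f(z+t(x-z))-r-t d$ on $[0,1]$. We have $h(0)=0$, $h(1)<0$, and $h\le0$ throughout. Let $t^*:=\sup\{t: h(t)=0\}$, and set $p:=z+t^*(x-z)\in\mathcal U$. Then $f(p)-r=\|p-z\|$, with $p\neq x$ because $h(1)<0$.

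Now apply Theorem~\ref{thm.straightline} at $p$. Along $y_p$ we have $D_Gf(p)(y_p)=1$. Uniqueness of $y_p$ in the uniformly convex setting, combined with the fact that the directional derivative along $(x-p)/\|x-p\|$ must be $\le 1$ while $h$ attains its maximum $0$ at $t^*$ from the left, forces control of $D_Gf(p)\big((x-p)/\|x-p\|\big)$. Concretely, $h\le 0$ on $[t^*,1]$ with $h(t^*)=0$ gives $D_Gf(p)(x-z)\le d$. Also, $h(t)\le 0=h(t^*)$ for $t<t^*$ gives $D_Gf(p)(x-z)\ge d$. Hence $D_Gf(p)(x-z)=d$, so $y_p=(x-z)/d$ by uniqueness. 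Theorem~\ref{thm.straightline} at $p$ then makes $f$ affine with slope $1$ on a neighbourhood of $p$ in $[p,x]$, so $h\equiv0$ just beyond $t^*$. This contradicts the definition of $t^*$.

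Hence $f(x)-r\ge d$, and combining with the upper bound gives $f(x)=r+\mathrm{dist}(x,[f=r])$.

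For $\mathcal U=X$, the segment from the lower-bound step always lies in $X$. Taking $w=x-(f(x)-r)y_x$ gives $\mathrm{dist}(x,[f=r])\le f(x)-r$ directly. The reverse inequality holds because every $z\in[f=r]$ satisfies $f(x)-r\le\|x-z\|$ by the Lipschitz bound, and no attained minimizer is needed. The case $f(x)\le r$ follows by applying this to $-f$, which solves the same equation.
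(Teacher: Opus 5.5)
There is a genuine gap in your contradiction argument: the case $t^*=0$. You derive $D_Gf(p)(x-z)\ge d$ from $h(t)\le 0=h(t^*)$ for $t<t^*$. If $t^*=0$, there is no such $t$ in $[0,1]$. For $t<0$ the Lipschitz bound only yields $h(t)\le 2|t|d$, so you get no lower bound on $D_Gf(z)(x-z)$. This is not a degenerate case; it is the heart of the matter. When $t^*>0$, $f$ already increases with unit slope along $[z,p]$, and your uniqueness argument merely propagates this to $x$. What has to be proved is that $f$ leaves $z$ toward $x$ with unit slope at all.

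A symptom is that your lower-bound argument never uses that $z$ is a nearest point of $[f=r]$. You note $\mathrm{dist}(p,[f=r])=\|p-z\|$ but never use it. So the argument would equally give $f(x)-r\ge\|x-z\|$ for every $z\in[f=r]$ with $[x,z]\subset\mathcal U$, which is false. Take $f(y)=y_1$ on Euclidean $\R^2$, $r=0$, $x=(1,0)$, $z=(0,5)$. Then $h(t)=t(1-\sqrt{26})<0$ on $(0,1]$, so $t^*=0$.

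The missing ingredient is the paper's use of the minimality of $z$ on a small ball. Take $x_0:=z+t_0(x-z)$ with $t_0\in(0,1]$ so small that $\overline B(x_0,t_0d)\subset\mathcal U$; this is possible since $z\in\mathcal U$ and $\mathcal U$ is open.

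\emph{Step 1: $f(x_0)>r$.} Every $q\in(z,x]$ satisfies $\|x-q\|<d$, so $f\neq r$ on $(z,x]$. Since $f(x)>r$ and $(z,x]$ is connected, $f>r$ there.

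\emph{Step 2: $f\ge r$ on the ball.} Suppose some $w\in\overline B(x_0,t_0d)$ had $f(w)<r$. The intermediate value theorem on $[x_0,w]$ gives $w'\in[f=r]$ with $\|x_0-w'\|<t_0d$. Hence $\|x-w'\|<(1-t_0)d+t_0d=d$, contradicting the minimality of $d$.

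\emph{Step 3: $t^*>0$.} Apply Theorem~\ref{thm.straightline} at $x_0$ backwards along $y_{x_0}$ for length $t_0d$; this segment lies inside the ball. It gives $r\le f(x_0-t_0d\,y_{x_0})=f(x_0)-t_0d$, i.e.\ $h(t_0)\ge 0$. Hence $h(t_0)=0$ and $t^*\ge t_0>0$.

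With this step inserted, your argument closes. The paper concludes the same way, identifying $y_{z}=(x-z)/d$ by uniqueness and then applying the theorem at $z$ along $[z,x]$. Your treatment of the case $\mathcal U=X$ is correct as it stands.
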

\begin{comment}
\begin{corollary}\label{corol.dist}{\normalfont (representing via distance functions)}
In the context of Theorem~\ref{thm.straightline}, assume that $\mathcal U = X$.
Then, the following relations hold true
\begin{align*}
    & \, f(x) = r + \mathrm{dist}(x, [f = r]), \quad \text{ if } f(x) \geq r, \\
    & \, f(x) = r - \mathrm{dist}(x, [f = r]), \quad \text{ if } f(x) \leq r.
\end{align*}
\end{corollary}
\end{comment}

\subsection{Preliminaries in Banach space theory}\label{sec: preliminaries}
In this section we recall some concepts from Banach space theory that will be needed throughout this work. 
From now on, $(X,\|\cdot\|)$ always denotes a real Banach space and $(X^*,\|\cdot\|_*)$ is its dual space. The evaluation of a linear map $x^*\in X^*$ at some point $x\in X$ is indistinctively written by $x^*(x)$ or $\langle x^*,x\rangle$.
$B_X$, $\overline{B}_X$ and $S_X$ denote the open unit ball, the closed unit ball and the unit sphere of $X$ respectively. 
We also write $S^{d-1}$ for the unit sphere of $\R^d$ (where the norm is understood from the context).
A function $f:X\to\R$ is Gateaux-differentiable at $\overline{x}\in X$ if there is $x^*\in X^*$ such that
\[\lim_{t\to 0}\frac{f(\overline{x}+ty)-f(\overline{x})}{t}-x^*(y)=0,\quad \text{for all }y\in X.\]
We write $D_Gf(\overline{x}):=x^*$. 
Further, $f$ is Fréchet-differentiable at $\overline{x}$ if 
\[\lim_{t\to 0}\sup_{y\in B_X}\left| \frac{f(\overline{x}+ty)-f(\overline{x})}{t}-x^*(y) \right| = 0.\]
We write $Df(\overline{x}):=x^*$. In particular, if $f$ is Fréchet-differentiable at $\overline{x}$, then it is Gateaux-differentiable at $\overline{x}$ with $D_Gf(\overline{x})=Df(\overline{x})$. Recall that for Lipschitz functions defined on finite-dimensional spaces, both notions of differentiability coincide.

\medskip

Concerning the geometry of $X$, we require the following three definitions: $X$ is said to be \textit{uniformly convex} if for every $\varepsilon\in(0,2)$ there is $\delta>0$ such that
    \[\text{for all } x,y\in X,~ \|x\|=\|y\|=1,\qquad \|x-y\|\geq \varepsilon\implies \|x+y\| \leq 2(1-\delta).\]
$X$ is said to be \textit{smooth} if $\|\cdot\|$ is Fréchet-differentiable outside the origin (and so ${\|\cdot\|\in C^1(X\setminus\{0\})}$) and \textit{uniformly smooth} if for every $\varepsilon\in(0,2)$ there is $\delta>0$ such that
    \[\text{for all } x,y\in X \,\text{ with }\|x\|=1,\|y\|\leq \delta,\, \text{ one has } \quad \|x+y\|+\|x-y\|\leq 2 + \varepsilon \|y\|.\]   
It is well known that uniformly smooth spaces have continuously differentiable norms (outside the origin). 
For the sake of brevity, whenever $\| \cdot \|$ is Fr\'echet differentiable, we write $J:X\setminus\{0\}\to X^*$ the map defined by $J(x):= D\|\cdot\|(x)$.

\begin{proposition}{\normalfont \cite[Fact 9.7]{Fabianbook}}
    Let $(X,\|\cdot\|)$ be a Banach space.
    Then $X$ is a uniformly smooth Banach space if and only if $\|\cdot\|$ is Fréchet-differentiable outside $0$ and $J: S_X\to X^*$ is uniformly continuous. 
\end{proposition}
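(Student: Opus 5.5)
The plan is to prove both implications directly from the definition of uniform smoothness. The main tool is the convexity of the norm, which gives subgradient inequalities. Throughout, $J$ is positively $0$-homogeneous, i.e. $J(\lambda x)=J(x)$ for $\lambda>0$, and for $x\neq 0$ it satisfies $\|J(x)\|_*=1$ and $J(x)(x)=\|x\|$.

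\emph{($\Rightarrow$)}. Fix $\varepsilon>0$ and let $\delta>0$ be given by uniform smoothness. Take $x\in S_X$ and any subgradient $x^*\in\partial\|\cdot\|(x)$, so that $\|x^*\|_*=1$ and $x^*(x)=1$. Convexity gives $\|x\pm y\|\geq 1\pm x^*(y)$. Hence, for $\|y\|\leq\delta$,
\[
0\leq \|x+y\|-1-x^*(y)\leq \|x+y\|+\|x-y\|-2\leq \varepsilon\|y\| .
\]
This shows that $\|\cdot\|$ is Fréchet-differentiable at $x$ with derivative $x^*$, and that the first-order remainder is controlled uniformly in $x\in S_X$. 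By homogeneity the norm is then Fréchet-differentiable on $X\setminus\{0\}$.

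Uniform continuity of $J$ on $S_X$ comes from combining the uniform estimate at $x$ with the subgradient inequality at a nearby point $z$. Let $x,z\in S_X$ with $\|x-z\|\leq\varepsilon\delta$, and let $\|h\|=\delta$. We have
\[
\|x+h\|\leq 1+J(x)(h)+\varepsilon\delta,
\qquad
\|x+h\|\geq \|z+h\|-\|x-z\|\geq J(z)(z+h)-\varepsilon\delta=1+J(z)(h)-\varepsilon\delta .
\]
Therefore $J(z)(h)-J(x)(h)\leq 2\varepsilon\delta$. Taking the supremum over $\|h\|=\delta$ (using $\pm h$) yields $\|J(z)-J(x)\|_*\leq 2\varepsilon$.

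\emph{($\Leftarrow$)}. Fix $\varepsilon>0$ and choose $\eta>0$ from the uniform continuity of $J$ on $S_X$. Let $\|x\|=1$ and $\|y\|\leq\delta$, with $\delta$ small. The segments $x\pm sy$, $s\in[0,1]$, avoid the origin, so the $C^1$ norm can be integrated along them:
\[
\|x+y\|+\|x-y\|-2=\int_0^1 \big(J(x+sy)-J(x-sy)\big)(y)\,ds
\leq \|y\|\sup_{s\in[0,1]}\big\|J(x+sy)-J(x-sy)\big\|_* .
\]
By $0$-homogeneity we may replace $x\pm sy$ by its normalization. Since $\|x\pm sy\|\geq 1-\delta$, the two normalized points are at distance at most $C\delta$ from each other (for instance $\leq 4\delta/(1-\delta)$). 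Choosing $\delta$ so that this distance is below $\eta$ makes the supremum at most $\varepsilon$. This is exactly the defining inequality of uniform smoothness.

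The only delicate point is the uniform continuity of $J$ in the forward direction. The trick is to test $J(x)-J(z)$ against vectors of size exactly $\delta$ and to choose the closeness scale $\|x-z\|\leq\varepsilon\delta$ proportional to $\delta$. The remaining steps are routine convexity estimates.
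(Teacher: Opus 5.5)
Your proof is correct in both directions. The paper gives no proof of this statement: it quotes it from Fabian et al.\ (Fact 9.7) and uses it as a black box, so there is no in-paper argument to compare against. What you wrote is the standard textbook route.

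For ($\Rightarrow$), the subgradient sandwich $0\le \|x+y\|-1-x^*(y)\le \|x+y\|+\|x-y\|-2\le\varepsilon\|y\|$ gives a first-order estimate that is uniform over $S_X$. Testing $J(z)-J(x)$ against vectors of norm exactly $\delta$, with the closeness scale $\|x-z\|\le\varepsilon\delta$, then gives $\|J(z)-J(x)\|_*\le 2\varepsilon$.

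For ($\Leftarrow$), integrating $J$ along $x\pm sy$ and normalizing the two points, which are at distance at most $4\delta/(1-\delta)$, gives exactly the defining inequality of uniform smoothness.

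One point deserves a sentence. In ($\Leftarrow$) you call the norm $C^1$, but the hypothesis only gives continuity of $J$ on $S_X$. Continuity on $X\setminus\{0\}$ follows from $J(x)=J(x/\|x\|)$ and the continuity of $x\mapsto x/\|x\|$ there. Alternatively, $s\mapsto\|x\pm sy\|$ is Lipschitz and everywhere differentiable, so the fundamental theorem of calculus applies either way.
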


Whenever $J |_{S_X}$ is uniformly continuous, we denote by $\widehat{\omega} : [0,+\infty)\to \R\cup\{+\infty\} $ the modulus of continuity of $J$ on $S_X$.
That is
\[\widehat{\omega}(t):=\sup\{\|J(x)-J(y)\|_*:~x,y\in S_X,~\|x-y\|\leq t\},\quad\text{for all }t\geq 0.\]
The \textit{concave modulus of continuity} of $J$ is denoted by $\omega : [0,+\infty)\to\R\cup\{+\infty\}$ and it is the concave envelope of $\widehat{w}$.
It is well known that $\lim_{t\to 0^+}\omega(t)=0$.

\medskip

For an open set $\mathcal{U}\subset X$, we say that a function $f\in C^{1,\omega}_{loc}(\mathcal{U})$ if $f$ is continuously differentiable and for every $x\in \mathcal{U}$, there is $r>0$ and $C>0$ such that
\[\|Df(y)-Df(z)\|_*\leq C\omega(\|y-z\|),\quad\text{for all }y,z\in B(x,r)\cap\mathcal{U}.\]
\textbf{Outline of the manuscript.} In Section~\ref{sec.2} we present the proof of Theorem~\ref{thm.straightline}. In Section~\ref{sec.3} we continue with the proofs of Corollary~\ref{corol.regu+linear} and Corollary~\ref{corol.dist.2}. Section~\ref{sec.4} is dedicated to the finite-dimensional setting, so we provide the proofs of Proposition~\ref{prop.fini-dim} and Corollary~\ref{cor.convexaffine}. Finally, we conclude this manuscript with an appendix that contains a useful regularity estimate.

\section{Proof of Theorem~\ref{thm.straightline}}\label{sec.2}
Fix the function $f:\mathcal{U}\subset X\to \R$ as in Theorem~\ref{thm.straightline}.
Without loss of generality, assume that $ \overline{B}_X\subset \mathcal{U}$ and that $f(0)=0$.

\medskip

\begin{claim}\label{claim.1lip}
    The function $f$ is $1$-Lipschitz in $\overline{B}_X$.
\end{claim}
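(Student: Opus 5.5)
Since $\overline{B}_X$ is convex and contained in $\mathcal U$, the plan is to reduce the claim to a one-dimensional mean value argument along segments. Fix $x,y\in\overline{B}_X$ and set $g(t):=f(x+t(y-x))$ for $t\in[0,1]$. The segment $[x,y]$ lies in $\overline{B}_X\subset\mathcal U$. Because $f$ is Gateaux-differentiable at every point of $\mathcal U$, the function $g$ is differentiable on $[0,1]$ (one-sided at the endpoints). Its derivative is
\[
g'(t)=D_Gf\bigl(x+t(y-x)\bigr)(y-x).
\]
This holds because the Gateaux derivative is a genuine two-sided directional limit. Since $x+t(y-x)\in\mathcal U$ for $t$ slightly outside $[0,1]$ as well, $\mathcal U$ being open, $g$ is in fact differentiable on an open interval containing $[0,1]$.

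Next I would bound the derivative using the eikonal equation. Since $\|D_Gf(z)\|_*=1$ for every $z\in\mathcal U$, we have
\[
|g'(t)|\le \|D_Gf(x+t(y-x))\|_*\,\|y-x\|=\|y-x\|
\]
for all $t$.

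Finally I would invoke the classical mean value theorem for the real function $g$. This needs only differentiability of $g$ on $(0,1)$ and continuity on $[0,1]$, and continuity follows from differentiability. It gives
\[
|f(y)-f(x)|=|g(1)-g(0)|=|g'(\xi)|\le\|y-x\|
\]
for some $\xi\in(0,1)$, which is exactly the $1$-Lipschitz bound.

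There is no real obstacle here. The only point to watch is that mere Gateaux (rather than Fréchet) differentiability suffices. It does, because we only ever differentiate along the single fixed direction $y-x$, so no chain rule beyond directional derivatives is needed. Note also that neither uniform convexity nor the normalization $f(0)=0$ is used: the same argument shows that $f$ is $1$-Lipschitz on any convex subset of $\mathcal U$.
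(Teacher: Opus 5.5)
Your proposal is correct and follows essentially the same route as the paper: restrict $f$ to the segment via $g(t)=f(x+t(y-x))$, identify $g'(t)=D_Gf(x+t(y-x))(y-x)$, bound it by $\|y-x\|$ using $\|D_Gf\|_\ast\equiv 1$, and conclude with the one-variable mean value theorem. You also make explicit two points the paper leaves implicit: the mean value step itself, and the fact that Gateaux differentiability suffices because only directional derivatives along the single direction $y-x$ are needed.
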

\textit{Proof of Claim~\ref{claim.1lip}}
    Let $x,y\in \overline{B}_X$.
    Note that $[x, y] \subset \overline{B}_X$.
    Define
    \[
    g(t) := f(x + t(y - x)), \quad t \in [0, 1].
    \]
    Then, $g$ is differentiable and, by the chain rule,
    \[
        |g'(t)| = |D_Gf(x + t(y -x))(y - x)| \leq \| D_Gf(x + t(y -x)) \|_\ast \| y - x \| = \| y - x \|.
    \]
    Therefore, $|f(x)-f(y)| = |g(1)-g(0)| \leq \|y-x\|$ and so $f$ is $1$-Lipschitz.
\hfill$\Diamond$ 
% The next lemma is a key step of our argument.
\begin{claim}\label{claim.maxmin}
    One has
    \[
    \sup_{z \in \overline{B}_X} f(z) = 1 \,\, \text{ and } \,\, \inf_{z \in \overline{B}_X} f(z) =-1.
    \]
    Moreover, the above supremum and infimum are attained on $S_X$. 
\end{claim}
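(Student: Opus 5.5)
The plan is to obtain near-maximizers and near-minimizers on $S_X$ from Ekeland's variational principle, and then to use uniform convexity to show that such near-maximizers form a Cauchy sequence. Throughout we keep the normalization $\overline B_X\subset\mathcal U$ and $f(0)=0$.

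\textbf{Step 1 (upper bound).} By Claim~\ref{claim.1lip} and $f(0)=0$, we have $|f(z)|\le\|z\|\le 1$ for every $z\in\overline B_X$. Hence $\sup_{\overline B_X}f\le 1$ and $\inf_{\overline B_X}f\ge -1$.

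\textbf{Step 2 (near-maximizers lie on the sphere).} Fix $\lambda\in(0,1)$. We apply Ekeland's variational principle to the continuous, bounded-below function $F:=-f$ on the complete metric space $\overline B_X$, with starting point $0$ and slope $\lambda$. This yields $z_\lambda\in\overline B_X$ such that:
\begin{itemize}
\item[(a)] $f(z_\lambda)\ge f(0)+\lambda\|z_\lambda\|=\lambda\|z_\lambda\|$;
\item[(b)] $f(w)<f(z_\lambda)+\lambda\|w-z_\lambda\|$ for every $w\in\overline B_X\setminus\{z_\lambda\}$.
\end{itemize}
Suppose $z_\lambda$ lay in the open ball $B_X$. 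Since $\|D_Gf(z_\lambda)\|_*=1>\lambda$, there is $v\in S_X$ with $D_Gf(z_\lambda)(v)>\lambda$. Taking $w=z_\lambda+tv$ in (b), which is admissible for small $t>0$, dividing by $t$ and letting $t\to0^+$ gives $D_Gf(z_\lambda)(v)\le\lambda$, a contradiction. Therefore $\|z_\lambda\|=1$, and (a) gives $f(z_\lambda)\ge\lambda$. Letting $\lambda\to1$ shows $\sup_{\overline B_X}f=1$.

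Applying the same argument to $-f$, which satisfies the same hypotheses, shows $\inf_{\overline B_X}f=-1$. We thus obtain sequences $z_n,w_n\in S_X$ with
\[
f(z_n)\ge 1-\tfrac1n \quad\text{and}\quad f(w_n)\le -1+\tfrac1n .
\]

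\textbf{Step 3 (attainment via uniform convexity).} Using Claim~\ref{claim.1lip},
\[
\|z_n-w_m\|\ \ge\ f(z_n)-f(w_m)\ \ge\ 2-\tfrac1n-\tfrac1m .
\]
Equivalently, the unit vectors $z_n$ and $-w_m$ satisfy $\|z_n+(-w_m)\|\ge 2-\tfrac1n-\tfrac1m$. By uniform convexity (in contrapositive form), this forces $\|z_n-(-w_m)\|\to0$ as $n,m\to\infty$. The triangle inequality $\|z_n-z_k\|\le\|z_n+w_m\|+\|z_k+w_m\|$ then shows that $(z_n)$ is Cauchy, and similarly $(w_n)$ is Cauchy.

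Let $z:=\lim z_n$ and $w:=\lim w_n$. Both lie in $S_X$, since $S_X$ is closed, and $w=-z$. By continuity of $f$, $f(z)=1$ and $f(-z)=-1$. So the supremum and the infimum are attained on $S_X$.

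The main obstacle is Step~2. A naive maximization fails because $f$ need not be weakly upper semicontinuous, so one cannot simply take a maximizer. Ekeland's principle with slope strictly below $1$ handles this: it produces approximate maximizers at which the condition $\|D_Gf\|_*=1$ rules out interior points. Step~3 is where uniform convexity enters. The key device is to compare near-maximizers with near-minimizers rather than with one another.
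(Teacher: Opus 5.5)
Your proof is correct. Step~3 matches the paper, but Step~2 takes a genuinely different route.

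\textbf{The paper's Step 2.} The paper obtains $\sup_{\overline B_X} f\ge 1-\varepsilon$ by an explicit transfinite induction. Starting at $0$, it repeatedly moves along a direction $p_\lambda$ with $D_Gf(x_\lambda)(p_\lambda)>1-\varepsilon$. At limit ordinals it passes to the limit, which exists because $\sum\|x_{\lambda+1}-x_\lambda\|\le 1/(1-\varepsilon)$ makes the net Cauchy. The process stops at a countable ordinal $\Lambda$ with $x_\Lambda\in S_X$, and telescoping gives $f(x_\Lambda)\ge(1-\varepsilon)\|x_\Lambda\|$.

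\textbf{Your Step 2.} You replace all of this by Ekeland's variational principle, applied to $-f$ on $\overline B_X$ with slope $\lambda<1$. Property (a) plays the role of the telescoped inequality. Property (b), combined with $\|D_Gf\|_*=1>\lambda$, rules out interior points, just as the paper's successor step shows the construction cannot halt inside $B_X$.

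\textbf{Comparison.} The paper's construction is essentially a hand-made proof of the Ekeland (or Brezis--Browder) principle. Your version is therefore shorter and avoids the ordinal bookkeeping: countability of $\Lambda$, and telescoping and the triangle inequality over transfinite sums. What the paper's version buys is that it only uses the pathwise inequality $f(x')-f(x)>(1-\varepsilon)\|x'-x\|$ together with subadditivity. For that reason it is reused verbatim with the asymmetric gauge $\rho_Q$ in Claim~\ref{claim.a}, where $\rho_Q(x'-x)$ is not a metric. To run your argument there, you would need a quasi-metric version of Ekeland's principle.

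\textbf{Step 3.} This is the same as the paper's attainment argument: near-maximizers are compared with near-minimizers and uniform convexity forces them to be Cauchy. Yours additionally yields $w=-z$ at this stage, which the paper only extracts later, in the conclusion of the proof of Theorem~\ref{thm.straightline}.
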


\textit{Proof of Claim~\ref{claim.maxmin}.}
Observe first that $\sup_{z \in \overline{B}_X} f(z)\leq 1$ since $f$ is $1$--Lipschitz and $f(0) = 0$.
Fix $\varepsilon \in (0, 1)$.
We prove that 
\[
    \sup_{z \in \overline{B}_X} f(z) \geq 1 - \varepsilon.
\]

\medskip

To this end, we construct a generalized sequence $\{ x_\lambda \}_{\lambda \leq \Lambda}$ starting from the origin and moving at each step in a direction along which $f$ increases with slope at least $1-\varepsilon$.
We continue this construction until we reach the boundary of $\overline B_X$.
Since the total increase of $f$ along the construction dominates $(1-\varepsilon)$ times the traveled distance, the final point will satisfy $f(x_\Lambda)\ge 1-\varepsilon$.

\medskip

\textbf{Construction of $\pmb{\{ x_\lambda \}_{\lambda \leq \Lambda} }$.}
Define $x_0=0\in X$ and, proceeding by transfinite induction, we construct a generalized sequence $\{x_\lambda\}_{\lambda\leq \Lambda}$ that satisfies
    \begin{itemize}
        \item $f(x_{\lambda +1})-f(x_\lambda)> (1-\varepsilon)\|x_{\lambda+1} - x_\lambda\|$, for all $\lambda<\Lambda$,
        \item $x_\alpha=\lim_{\lambda\to \alpha} x_\lambda$, for all $\alpha\leq \Lambda$ limit ordinal.
        \item $x_\Lambda\in S_X$. 
    \end{itemize}
\textit{Successor step.}
Let $\lambda$ be an ordinal such that $x_\lambda\in \overline{B}_X$ is already defined. 
If $x_\lambda\in S_X$, the induction ends. 
If $x_\lambda\in B_X$, recalling that $\|D_G f(x_\lambda)\|_* = 1$, there exists $p_\lambda\in S_X$ such that
    \begin{align*}
        1-\varepsilon < D_G f(x_\lambda)(p_\lambda) = \lim_{t\to 0}\dfrac{f(x_\lambda+tp_\lambda)-f(x_\lambda)}{t}.
    \end{align*}
So, there is $t_\lambda>0$ such that $x_\lambda+t_\lambda p_\lambda\in \overline{B}_X$ and
    \[(1 -\varepsilon)t_\lambda  < f(x_\lambda+t_\lambda p_\lambda)-f(x_\lambda).\]
    Set $x_{\lambda+1}=x_\lambda +t_\lambda p_\lambda$.
\medskip

\textit{Limit step.}
Let $\alpha$ be a limit ordinal and assume that $\{x_\lambda\}_{\lambda<\alpha}\subset B_{X}$ is already defined. Then, thanks to the continuity of $f$ and the fact that $\{f(x_\lambda)\}_{\lambda<\alpha}$ is increasing, we get
    \begin{align*}
        \sum_{\lambda<\alpha} \|x_{\lambda+1}-x_\lambda\| \leq \sum_{\lambda <\alpha} \dfrac{f(x_{\lambda+1})-f(x_\lambda)}{1-\varepsilon}\leq \lim_{\lambda\to \alpha }\dfrac{f(x_\lambda)-f(x_0)}{1-\varepsilon}\leq \dfrac{1}{1-\varepsilon}.
    \end{align*}
    So, $\{x_\lambda\}_{\lambda<\alpha}$ is Cauchy net and therefore convergent.
    
    \medskip
    
    Observe that, since the generalized sequence $\{f(x_\lambda)\}_{\lambda \leq \alpha}$ is strictly increasing and $f$ is bounded on $\overline{B}_X$, the above induction process stops at a countable ordinal $\alpha:=\Lambda$, with $x_\Lambda\in S_X$.

    \medskip
    
    Now we realize that
    \[f(x_\Lambda)=\sum_{\lambda<\Lambda}f(x_{\lambda+1})-f(x_\lambda)\geq (1-\varepsilon)\sum_{\lambda<\Lambda}\|x_{\lambda+1}-x_{\lambda}\| \geq (1-\varepsilon)\|x_\Lambda-x_0\|=1-\varepsilon.\]
    So, $\sup_{z\in \overline{B}_X} f(z)\geq 1-\varepsilon$. Since $\varepsilon>0$ is arbitrary, we deduce that $\sup_{z \in \overline{B}_X} f(z) \geq 1$. Thus
    \[
        \sup_{z\in \overline{B_X}} f(z) = 1.
    \]
    By exchanging $f$ by $-f$, we deduce that $\inf_{z \in \overline{B}_X} f(z) = -1$.

    \medskip
    
    It remains to check that the above supremum is a maximum. Let $\{x_n\}_n,\{y_n\}_n\subset \overline{B}_X$ be such that 
    \[\lim_{n\to\infty} f(x_n)=1 \quad\text{and}\quad \lim_{n\to\infty}f(y_n)=-1.\]
    We can and shall assume that $\|x_n\|=\|y_n\|=1$ for all $n\in \N$. 
    We show that $\{x_n\}_n$ is a Cauchy sequence. 
    Fix $\varepsilon\in (0,2)$ and let $\delta:=\delta(\varepsilon)>0$ given by the uniform convexity of $X$. 
    
    \medskip
    
    Fix $k\in \N$ such that $f(y_k)<-1+\delta$. 
    Since $f$ is $1$-Lipschitz, we have that
    \[  f(x_n)-(-1+\delta) < f(x_n)-f(y_k)\leq \|x_n-y_k\|\]
    Therefore, there is $N_\delta\in \N$ such that
    \[ \|x_n-y_k\| \geq 2(1-\delta),\quad \text{for all }n\geq N_\delta.\]
    By definition of $\delta$, we have then that
    \[ \| x_n + y_k \| = \| x_n-(-y_k)\| \leq \varepsilon,\quad \text{for all }n\geq N_\delta. \]
    It follows that for any $m, n \geq N_\delta$
    \[
        \| x_n - x_m \| \leq \| x_n + y_k \| + \| x_m + y_k \| \leq 2\varepsilon.
    \]
Hence $\{x_n\}_n$ is Cauchy, and thus the sequence converges. 
Set $x_\infty:=\lim_{n\to\infty} x_n$. By continuity of $f$, we deduce that $f(x_\infty)=1$.
Analogously, by replacing $f$ by $-f$, we deduce that $y_\infty:=\lim_{n\to\infty}y_n$ is well defined and $f(y_\infty)=-1$.
Claim~\ref{claim.maxmin} is proven.
\hfill$\Diamond$

\medskip

\textbf{Conclusion.}
With the above claims established, we now complete the proof of Theorem~\ref{thm.straightline}.
We prove the result at the origin.
Let us check that $\mathrm{argmax}_{z \in \overline{B}_X}\,f(z)$ and $\mathrm{argmin}_{z \in \overline{B}_X}\,f(z)$ are singletons.
Indeed, otherwise, there would exist $z_1,z_2\in S_X$ such that $z_1\neq -z_2$, $f(z_1)=1$ and $f(z_2)=-1$.
Since $X$ is uniformly convex, we have $\| z_1 + (-z_2)\| < 2$.
Hence, we get
\[
    \dfrac{f(z_1) - f(z_2)}{\| z_1 - z_2\|} = \dfrac{2}{\| z_1 + (-z_2) \|}> 1,
\]
which contradicts the fact that $f$ is $1$-Lipschitz on $\overline{B}_X$.
Therefore both the maximizer and the minimizer of $f$ on $\overline{B}_X$ are unique.
Consequently, if one has
\[
\{ z_1 \} = \mathrm{argmax}_{z \in \overline{B}_X}\,f(z) \,\, \text{ and  } \,\, \{ z_2 \}= \mathrm{argmin}_{z \in \overline{B}_X}\,f(z),
\] 
then necessarily $z_1=-z_2$.
Let us write $y_0:=z_1$. 
Since $f$ is $1$-Lipschitz, we finally conclude that
    \[
    f(t y_0)= t,\quad\text{for all }|t|\leq1.
    \]
Now, by simple induction, repeating the argument at the endpoints of the interval of validity of the above identity, we finally deduce that 
\[
    f(t y_0)= t,\quad\text{for all }t\in \R~\text{such that }[0,ty_0]\subset \mathcal{U}.
    \]
\hfill$\square$

\section{Proofs of Corollaries}\label{sec.3}

\subsection{Proof of Corollary~\ref{corol.regu+linear}}

% For the sake of brevity, throughout the proof we write $J := D \| \cdot \|$.
%Since $X$ is uniformly convex, for every $x^*\in S_{X^*}$ there exists a unique $x\in S_X$ such that $x^*=J(x)$.

\emph{(i).}
The following argument is due to Crandall~\cite[Lemma~7.3(b)]{C_2008}, and in Banach spaces, it only requires Gateaux-differentiability of the norm at $y_{\bar x}$.
Since $\mathcal U = X$, Theorem~\ref{thm.straightline} gives
\begin{equation}\label{aff.along.line}
    f(\bar x + r y_{\bar x}) = f(\bar x) + r
    \qquad\text{for every }r\in\mathbb{R}.
\end{equation}
Set $x^* := D_G\|\cdot\|(y_{\bar x})\in X^*$.
Fix $z\in X$ and write $h := z-\bar x$. 
For every $r > 0$, the Lipschitz continuity of $f$ and  the identity~\eqref{aff.along.line} yield
\begin{equation}\label{aff.compa}
 r - \| r y_{\bar x} - h\|  \leq f(z) - f(\bar x) \leq - r + \| r y_{\bar x} + h\|.
\end{equation}
Since the norm is Gateaux-differentiable at $y_{\bar x}$, we get
\[
 \lim_{r \to + \infty} \bigl(\|r y_{\bar x} + h\| - r \bigr) = x^*(h)
 \quad \text{ and } \quad 
 \lim_{r\to+\infty}\bigl(r - \|r y_{\bar x} - h \|\bigr) = x^*(h).
\]
Thus, letting $r \to + \infty$ in~\eqref{aff.compa}, we obtain $f(z) - f(\bar x) = x^*(z-\bar x).$
Since $z \in X$ was arbitrary, we infer that
\[
    f(z) = x^*(z) + f(\bar x) - x^*(\bar x) \qquad\text{for every } z \in X.
\]
Therefore, $f$ is affine.

\medskip

\textit{(ii).}
Fix $x \in \mathcal U$.
We first show that $f$ is Fr\'echet differentiable at $x$. 
Thanks to Theorem~\ref{thm.straightline}, there exists $y \in S_X$ and $r > 0$ so that
\[
    D_G f(x)(y) = 1 \,\, \text{ and }  \,\, f(x + ty) = f(x) + t \,\, \text{ for every $t \in [-r, r]$.}
\]
Since $f$ is $1$-Lipschitz in a neighborhood of $x$ and $\| D_G f(x) \|_\ast = 1$, the directional derivative of $f$ at $x$ attains the Lipschitz constant in the direction $y$. 
Using the fact that $\| \cdot \| \in C^1(X \setminus \{ 0 \})$, it follows from the result of Fitzpatrick \cite[Theorem 2.4]{F_1984} that $f$ is Fr\'echet differentiable at $x$. 

\medskip

Now we prove that $Df$ is continuous in $\mathcal{U}$. Assume that $f$ is not $C^1$.
Without loss of generality, we can assume that $B_X \subset \mathcal U$, $f(0) = 0$, and $Df$ is not continuous at $0$.
Therefore, there is a sequence $\{x_n\}_n\subset X$, convergent to~$0$, and $\varepsilon>0$ such that $\|Df(x_n)-Df(0)\|_\ast \geq \varepsilon$ for all $n\in \N$.
By uniform convexity of $X$, there is a unique point $z_n\in S_X$ such that $Df(x_n)(z_n)=1$. 
By smoothness of $\|\cdot\|$, we also have $Df(x_n)=J(z_n)$.
After applying Theorem~\ref{thm.straightline} at $x_n$, we deduce that 
\[f(x_n +tz_n)= f(x_n)+t,~\quad \text{for all }|t|\leq 1-\|x_n\|.\]
Let us call $z_\infty \in S_X$ the unique point such that $Df(0)(z_\infty)=1$. 
Observe that there is $\eta>0$ such that $\|z_n-z_\infty\|>\eta$.
Indeed, otherwise we will have that \[\liminf_{n\to\infty} \|Df(x_n)-Df(0) \|_\ast= \liminf_{n\to\infty}\| J(z_n)-J(z_\infty)\|_\ast =0,\] due to the continuity of $J$ on $S_X$. 
Choose $t_n> 0$ such that $y_n:=x_n - t_nz_n\in S_X$.
It follows that $t_n\in[1-\|x_n\|,1+\|x_n\|]$ and $f(y_n)= f(x_n)-t_n$.
Now, note that
\begin{align*}
    \|y_n+z_\infty\| &\geq \|z_n-z_\infty\| - \|x_n+ (1 - t_n)z_n\|\geq \eta-2\|x_n\|.
\end{align*}
Let $N\in \N$ be such that $\|x_n\|\leq \eta/4$ and consider $\delta:= \delta(\eta/2)$ given by the uniform convexity of $X$. Then, 
\begin{align*}
    \|y_n - z_\infty\| \leq 2(1-\delta),\quad\text{for all }n\geq N.
\end{align*}
However, we notice that
\begin{align*}
    \limsup_{n\to\infty}\dfrac{f(z_\infty)-f(y_n)}{\|y_n-z_\infty\|}\geq \limsup_{n\to\infty}\dfrac{1 - (f(x_n)-t_n)}{2(1-\delta)}= \dfrac{1}{1-\delta}.
\end{align*}
The above expression contradicts the $1$-Lipschitzianity of $f$. 
Therefore, $ Df$ is continuous at $0$.

\medskip

\textit{(iii).} 
Fix $\bar x \in \mathcal U$ and let $r > 0$ be such that $B(\bar x, 2r) \subset \mathcal U$.
% We prove that there exist $\delta < r$ and $L > 0$ such that 
% \begin{equation}\label{df_holder}
%	\| Df(x) - Df(y) \|_\ast \leq L \omega(\| x - y \|), \quad \text{ for every } x, y \in B_X(\bar x, r) \text{ with } \| x - y \| \leq \delta.
%\end{equation}
Thanks to Theorem~\ref{thm.straightline}, for every $z \in B(\bar x, r)$, there exists a unique $y_z \in S_X$ such that 
\begin{equation}\label{linear_along_line}
Df(z) = J(y_z), \quad Df(z + t y_z) = Df(z) \quad \text{ and } \quad f(z + t y_z) = f(z) + t, 
\end{equation}
for every $t \in [- r, r]$.
Fix $\tau \in (0, r)$. 
Then  $x + \tau e \in \mathcal{U}$ for every $x \in B(\bar x, r) \text{ and } e \in S_X$.
Since $f$ is $1$--Lipschitz on $B(\bar x, 2r)$, we observe that 
\begin{equation}\label{upper_lip}
    f(x + h) \leq f(x - \tau y_x) + \| h + \tau y_x \| = f(x) - \tau + \| h + \tau y_x \| \quad \text{ for every } x, x + h \in B(\bar x, 2r).
\end{equation}
Similarly, we have
\begin{equation}\label{lower_lip}
    f(x + h) \geq f(x) + \tau - \| h - \tau y_x \| \quad \text{ for every } x, x + h \in B(\bar x, 2r).
\end{equation}
Denote $\mathcal{A} := \{ x \in X: \tau/2 \leq \| x \| \leq 3\tau/2 \}$.
Recall that $\omega$ is the concave modulus of continuity of $J$ on $S_X$.
It follows from the Mean Value Theorem that, for some $C_\tau > 0$,
\begin{equation}\label{taylor_norm}
    \big| \| a + h \| - \| a \| - J(a)(h)  \big| \leq C_\tau \| h \| \omega(\|h\|) \text{ for every $a, h$ with $[a, a + h] \subset \mathcal{A}$}.
\end{equation}
Notice that $J$ is $0$-homogeneous and hence, thanks to \eqref{linear_along_line}, it holds $J(\tau y_x) = J(y_x) = Df(x)$.
Combining \eqref{upper_lip} and \eqref{taylor_norm}, we obtain
\begin{align*}
    f(x + h) \leq & ~ f(x) - \tau + \| h + \tau y_x \| \\
    \leq & ~ f(x) - \tau + \Big( \| \tau y_x \| +  J(\tau y_x)(h)  + C_\tau \| h \| \omega(\|h\|) \Big) \\
    = & ~ f(x) +  Df(x)( h)+ 
    C_\tau \| h \| \omega(\| h \|),
\end{align*}
for every $x \in B(\bar x, r)$ and $\| h \| \leq \tau/2$.
Using \eqref{lower_lip} and \eqref{taylor_norm}, we get an analogous lower bound and therefore, for every $x, y \in B(\bar x, r)$ with $\| x - y \| \leq \tau/2$, one has
\begin{align*}
    \big| f(y) - f(x) -  Df(x)( y - x )  \big| \leq C_\tau \| y - x \| \omega(\|y - x \|).
  \end{align*}
Thanks to Proposition~\ref{prop.C1ome}, we conclude that $Df$ is uniformly continuous in $B(\bar x, \tau/8)$ with modulus $\widetilde\omega(t) = \widehat{C}_\tau \omega(t)$, for some $\widehat{C}_\tau > 0$.
\hfill$\square$

\begin{example} (nonsmooth norm and nonlinear solutions) \label{ex: nonlinear solution} \normalfont
    If one considers a norm which is not Gateaux-differentiable at some $\overline{x}\neq 0$, then the equation $\| Df \|_* \equiv 1$ admits nonlinear differentiable solutions which are not $C^1$-smooth.
    Indeed, let $(X,\|\cdot\|)$ be a Banach space such that there are $\overline{x}\in S_X$ and two vectors $x^*,y^*\in S_{X^*}$, with $x^\ast \neq y^\ast$, satisfying $x^*(\overline{x})=y^*(\overline{x})=1$.
    It follows that 
    \[[x^*,y^*]:=\mathrm{conv}\{x^*,y^*\}\subset S_{X^*}.\]
    Fix $\varphi :\mathbb R\to\R$ be an everywhere differentiable function which is  not of class $C^1$ such that $\varphi'(\mathbb R) \subset [-1/2, 1/2]$ (for instance, see \cite{DDT2024,DT2025}).
    Set \[f(x) =\frac{1}{2}\langle x^*+y^*,x\rangle + \varphi(\langle x^*-y^*,x\rangle),\quad\text{for every }x \in \mathbb X.\]
    A direct computation gives that for all $x\in X$,
    \[Df(x)= \dfrac{x^*+y^*}{2} + \varphi'(\langle x^*-y^*,x\rangle)(x^*-y^*)\in [x^*,y^*]\subset S_{X^*}.\]
    Therefore, $\|Df(x)\|_{*}=1$ for all $x\in X$. 
    The above construction is essentially the same as that in \cite[Remark 2.1]{CC_2010}, which was considered in the finite--dimensional setting.
\end{example}

\subsection{Proof of Corollary~\ref{corol.dist.2}}
Fix $x \in \mathcal U$ and $r < f(x)$.
Assume that there is $z_x\in [f=r]$ satisfying~\eqref{eq.cor.dist}.
Since $f$ is $1$--Lipschitz in $[x, z_x]$, it follows that
\[
    f(tx+ (1-t)z_x) \leq f(z_x)+t\|x-z_x\|=r+\mathrm{dist}(tx+ (1-t)z_x, [f = r]), \quad\text{for all }t\in[0,1].
\]
Let $t_0\in (0,1]$ be such that $x_0:= t_0x+(1-t_0)z_x$ satisfies $f(x_0) > r$ and $\overline{B}(x_0,\|x_0-z_x\|)\subset \mathcal U$. 
By definition of $z_x$, we claim that $\overline{B}(x_0,\|x_0-z_x\|)\subset [f\geq r]$.
Indeed, if not, there would exist $w \in \overline{B}(x_0,\|x_0-z_x\|)$ such that $f(w) < r$.
Since $f(w) < r < f(x_0)$, using the intermediate value theorem, there exists $w' \in [x_0, w] \setminus \{ x_0, w \} \subset \mathcal{U}$ such that $f(w') = r$.
It follows that
\[
    \| x - w' \| \leq 
    \|x - x_0\| + \| x_0 - w' \| < \| x - x_0 \| + \| x_0 - z_x \| = \| x - z_x \|,
\]
which contradicts the definition of $z_x$.
\medskip

Applying Theorem~\ref{thm.straightline} at $x_0$, there exists a unique direction $y_0\in S_X$ such that 
\[
    f(x_0 - t y_0) = f(x_0) - t \quad \text{ for every } t \in [0, \| x_0 - z_x \|].
\]
Since $x_0 - \| x_0 - z_x \| y_0 \in \overline{B}(x_0,\|x_0-z_x\|)$, we get $f(x_0) \geq r + \| x_0 - z_x \|$.
On the other hand, since $f$ is $1$--Lipschitz on $[x_0, z_x]$, we obtain $f(x_0) \leq f(z_x) + \| x_0 - z_x\| = r + \| x_0 - z_x\|$.
Thus, $f(x_0) = r + \| x_0 - z_x \|$.

\medskip

Combining $f(x_0) = r + \| x_0 - z_x \|$ and $f(z_x) = r$, the $1$--Lipschitz continuity of $f$ on $[x_0, z_x]$ readily yields
\[
    f \left( z_x+ t\frac{x_0-z_x}{\|x_0-z_x\|} \right)= f(z_x)+t  \quad \text{ for all } t\in[0,\|x_0-z_x\|].
\]
In particular
\[
    D_G f(z_x) \left( \dfrac{x_0 - z_x}{\| x_0 - z_x \|} \right) = 1. 
\]
Again applying Theorem~\ref{thm.straightline} at $z_x$, the uniqueness of $y_{z_x}$ implies that
\[
y_{z_x} = \dfrac{x_0-z_x}{\|x_0-z_x\|}
\]
and thus
\[
f\left(z_x + t y_{z_x}\right) = r+t \quad \text{for all $t>0$ such that $[z_x,z_x+t y_{z_x}]\subset \mathcal U$.}
\]
Hence, $f(x)= r+ \|x-z_x\| =r+\dist(x,[f=r])$.
The second part of the corollary follows directly from Theorem~\ref{thm.straightline} and the first part.
\hfill$\square$

\begin{remark}
    The distance representation in Corollary~\ref{corol.dist.2} generally fails when one of the two conditions in~\eqref{eq.cor.dist} fails.
    Indeed, consider $X = \R$ and $\mathcal U = (0, 1) \cup (1, 2)$.
Then, the function
\[
    f(x) = 
    \begin{dcases}
        x & x \in (0, 1) \\
        x + 100 & x \in (1, 2)
    \end{dcases}
\]
is differentiable in $\mathcal U$ and $|f'| \equiv 1$ on $\mathcal U$.
However, taking $r = 1/2$, we have $[f = r] = \{1/2 \}$.
At $\overline x = 3/2$, a direct computation gives
\[
    f(\overline x) = \dfrac{203}{2} >  \dfrac{1}{2} + 1= r + \mathrm{dist}(\overline x, [f = r]).
\] 
On the other hand, if we consider $X=(\R^2,\|\cdot\|_2)$, $\mathcal U:=\{x\in \R^2:~ x_1>1\}$ and $f: \mathcal U \to\R$ defined as the Euclidean norm $f(x)=\|x\|_2$. Then $f$ is differentiable on $\mathcal U$ and $\|Df\|\equiv 1$.
On the other hand, it can be easily checked that for $\overline{x}:=(2,6)$ and any $z\in [f=\sqrt{2}]$, $\mathrm{dist}(\overline{x},[f=\sqrt{2}])< \|\overline{x}-z\|_2$ and $f(\overline{x})= \sqrt{40}<\|\overline{x}-(1,1)\|_2 + \sqrt{2}=\sqrt{26}+\sqrt{2} = \mathrm{dist}(\bar x, [f = \sqrt{2}]) + \sqrt{2}$.
\end{remark}

\begin{comment}
\subsection{Proof of Corollary~\ref{corol.dist}}
Fix $x \in X$ and $r \in \R$.
Suppose first that $f(x) \geq r$.
Since $f$ is $1$--Lipschitz, it is not difficult to see that
\[
    f(x) - r \leq \mathrm{dist}(x, [f = r]).
\]
On the other hand, by Theorem~\ref{thm.straightline}, there exists a direction $y_x \in S_X$ such that
\[
    f(x + t y_x) = f(x) + t \quad \text{ for every } t \in \R.
\]
Therefore, setting $z_x := x - (f(x) - r)y_x$, we get $f(z_x) = r \in [f = r]$.
It follows that
\[
    \mathrm{dist}(x, [f = r]) \leq \| x - z_x \| = f(x) - r.
\]
Therefore, $f(x) - r = \mathrm{dist}(x, [f = r])$ for every $f(x) \geq r$. 
Analogously, we can prove that $f(x) - r = - \mathrm{dist}(x, [f = r])$ for every $f(x) \leq r$, which completes the proof.
\hfill$\square$
\end{comment}
\section{The finite--dimensional setting}\label{sec.4}
In finite dimensions, compactness of the unit sphere allows us to remove the uniform convexity assumption required in the infinite-dimensional setting.
In this section we prove Proposition~\ref{prop.fini-dim} and derive some consequences concerning gradient ranges and convex bodies.

\subsection{Proof of Proposition~\ref{prop.fini-dim}}
\emph{(i)}
The proof is an adaptation of Corollary~\ref{corol.regu+linear}-\emph{(iii)}.
We first record the substitute for Theorem~\ref{thm.straightline} in the current setting.
Indeed, following the same steps as in the Proof of Theorem~\ref{thm.straightline} and using the compactness of $S^{d-1}$, we obtain that for any $x\in \mathcal{U}$ and any $r>0$ such that $\overline{B}(x,r)\subset \mathcal{U}$, there are two vectors $y_x^+,y_x^- \in S^{d-1}$ such that 

\begin{equation}
    \label{eq.y_x+-}
    f(x + ty_x^+) = f(x) + t \quad \text{ and } \quad f(x - t y_x^-) = f(x) - t, \quad \text{ for } t \in [0, r].
\end{equation}

Fix $\bar x \in \mathcal{U}$ and choose $r > 0$ such that $B(\bar x, 4r) \subset \mathcal U$.
Then, for each $x \in B(\bar x, 2r)$, consider $y_x^+, y_x^- \in S^{d - 1}$ satisfying~\eqref{eq.y_x+-}.
Moreover, one has \[D_G f(x)(y_x^+) = D_G f(x)(y_x^-) = 1\quad \text{and }\quad J(y_x^+) = J(y_x^-) = D_G f(x),\quad\text{for all }x \in B(\bar x, 2r).\]
Now fix $x \in B(\bar x, 2r)$ and $\| h \| \leq r/2$.
Using the backward direction $y_x^-$ and the Lipschitz continuity of $f$, we get
\[
    f(x + h) \leq f(x - r y_x^-) + \| h + r y_x^- \| = f(x) - r + \| h + r y_x^-\|.
\]
Since the norm is of class $C^1$ on $\R^d \setminus \{ 0 \}$, the first--order estimate then gives, for some constant $C_r > 0$,
\[
    \| r y_x^- + h \| \leq r +  J(y_x^-)(h) + C_r \| h \| \omega(\| h \|),
\]
where $\omega$ is the concave modulus of $D \| \cdot \|$ on the unit sphere.
Therefore, using the fact that $J(y_x^-) = D_G f(x)$, we obtain
\begin{equation}\label{back.est}
    f(x + h) \leq f(x) +  D_G f(x) (h) + C_r \| h \| \omega(\| h \|).
\end{equation}
Similarly, using the forward direction $y_x^+$, one gets
\begin{equation}\label{for.est}
    f(x + h) \geq f(x) + r - \| h - r y_x^+ \| \geq f(x) +  D_G f(x)(h) - C_r \| h \| \omega(\| h \|).
\end{equation}
Combining~\eqref{back.est} and~\eqref{for.est}, we conclude that 
\[
    \big| f(x  + h) - f(x) -  D_G f(x)(h)  \big| \leq C_r \| h \| \omega(\| h \|) \quad \text{ for every } x \in B(\bar x, 2r),\,\, \| h \| \leq r /2.
\]
Therefore, applying Proposition~\ref{prop.C1ome}, $f \in C^{1, \omega}_{\mathrm{loc}} (\mathcal U)$.

\medskip

\emph{(ii)}
Assume now that $\mathcal{U}=\R^d$. We show that $f$ is affine. 
The technique closely follows the one presented in \cite[Proposition 6.25]{TG_Thesis}. 
For the sake of completeness we provide a proof.

\medskip
  
Assume, without any loss of generality, that $f(0) = 0$ and let $r>0$. 
Set $x_r,y_r\in \overline{B}_r$ be such that
\[
x_r\in \arg\max\,\{f(x):~x\in \overline{B}_r\}\quad\text{and}\quad y_r\in \arg\min\,\{f(x):~x\in \overline{B}_r\}.
\]
As in Claim~\ref{claim.maxmin} of the proof of Theorem~\ref{thm.straightline}, we can ensure that $\|x_r\|=\|y_r\|=r$, $f(x_r)=r$ and $f(y_r)=-r$. 
    Let us define $\widehat{x}_r:=x_r/r$ and $\widehat{y}_r:=y_r/r$ and let $\widehat{x}_\infty, \widehat{y}_\infty\in \partial B_1$ as any accumulation point of the set $\{\widehat{x}_r\}_{r > 0}$ and $\{\widehat{y}_r\}_{r > 0}$, as $r$ tends to infinity, respectively.
    \begin{claim}\label{claim.Long_Ray}
        $f(t \widehat{x}_\infty)=t$ and $f(t\widehat{y}_\infty)=-t$ for all $t\geq 0$. 
    \end{claim}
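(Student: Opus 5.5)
We have to show $f(t\widehat{x}_\infty)=t$ for all $t\ge 0$; the statement for $\widehat{y}_\infty$ then follows by applying the same argument to $-f$. The plan is to prove, for each fixed $r$, that $f(t\widehat{x}_r)=t$ on the whole segment $t\in[0,r]$. Then we pass to the limit along a subsequence $r_n\to\infty$ with $\widehat{x}_{r_n}\to\widehat{x}_\infty$, using continuity of $f$.

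\textbf{Step 1 (affine on the segment).} Fix $r>0$. By construction $f(0)=0$, $f(x_r)=r$ and $\|x_r\|=r$. By Claim~\ref{claim.1lip} (whose proof only uses $\|D_Gf\|_*=1$ and works on any ball, so $f$ is $1$-Lipschitz on $\R^d$), for $t\in[0,r]$ we get
\[f(t\widehat{x}_r)\le f(0)+t=t \quad\text{and}\quad f(t\widehat{x}_r)\ge f(x_r)-\|x_r-t\widehat{x}_r\|=r-(r-t)=t.\]
Hence $f(t\widehat{x}_r)=t$ for all $t\in[0,r]$. Symmetrically, $f(t\widehat{y}_r)=-t$ for $t\in[0,r]$, using $f(y_r)=-r$. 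The facts $\|x_r\|=r$ and $f(x_r)=r$ are quoted from the scaled version of Claim~\ref{claim.maxmin}. In finite dimensions the existence of the maximizer is immediate by compactness, and the value $r$ comes from the transfinite construction run on $\overline{B}_r$. Uniform convexity is not needed here, only attainment.

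\textbf{Step 2 (limit).} Fix $t\ge0$. Take a sequence $r_n\to\infty$ with $\widehat{x}_{r_n}\to\widehat{x}_\infty$; one exists by compactness of $\partial B_1$ and the definition of accumulation point. For $n$ large we have $r_n\ge t$, so Step 1 gives $f(t\widehat{x}_{r_n})=t$. Since $t\widehat{x}_{r_n}\to t\widehat{x}_\infty$ and $f$ is continuous (indeed $1$-Lipschitz, so $|f(t\widehat{x}_{r_n})-f(t\widehat{x}_\infty)|\le t\|\widehat{x}_{r_n}-\widehat{x}_\infty\|\to0$), we conclude $f(t\widehat{x}_\infty)=t$. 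The same argument applied to $\widehat{y}_r$ gives $f(t\widehat{y}_\infty)=-t$.

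\textbf{Main obstacle.} The only delicate point is justifying $f(x_r)=r$, i.e.\ that the supremum over $\overline{B}_r$ equals $r$, without uniform convexity. I would cite the first part of Claim~\ref{claim.maxmin}, which is dimension-free, after rescaling $g(x)=f(rx)/r$. This $g$ still satisfies $\|D_Gg\|_*=1$, so the claim gives $\sup_{\overline{B}_1} g=1$, and compactness gives a maximizer. A maximizer with value $r$ must have norm exactly $r$ by the Lipschitz bound, since $f(x_r)\le\|x_r\|\le r$. Everything else is the elementary Lipschitz squeeze above.
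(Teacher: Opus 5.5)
Your proof is correct and follows the same route as the paper. The Lipschitz squeeze between $f(0)=0$ and $f(x_r)=r$ gives $f(t\widehat{x}_r)=t$ on $[0,r]$, and continuity along a sequence $r_n\to\infty$ with $\widehat{x}_{r_n}\to\widehat{x}_\infty$ carries this to the limit. Your rescaling $g(x)=f(rx)/r$ combined with compactness, used to obtain $f(x_r)=r$ and $\|x_r\|=r$ without uniform convexity, just makes explicit what the paper cites from Claim~\ref{claim.maxmin}.
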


    \textit{Proof of Claim~\ref{claim.Long_Ray}.}
   Since for all $r>0$ we have that $\|x_r\| = r$, $f(0)=0$, $f(x_r)=r$ and $f$ is $1$-Lipschitz, $f(t\widehat{x}_r)=t$ for all $t\in [0,r]$.
    The continuity of $f$ leads to the fact that  $f(t \widehat{x}_\infty)=t$ for all $t\geq 0$. 
    The case of $\widehat{y}_\infty$ is analogous. 
    The claim follows.
    \hfill$\Diamond$

    \medskip

    Due to the $1$-Lipschitz continuity of $f$, we also obtain that $\|\widehat{x}_\infty-\widehat{y}_\infty\|=2$. 
    Thus, there is a functional $u^*\in X^*$, such that $\|u^*\|=1$, $u^*(\widehat{x}_\infty)=1$ and $u^*(\widehat{y}_\infty)=-1$. 
    We finally claim that $f \equiv u^*$. 
    Indeed, let us start by fixing $z\in \ker(u^*)$. Through a Taylor expansion of the norm around $\widehat{x}_\infty$, we deduce that
    $\|r\widehat{x}_\infty-z\| = r\|\widehat{x}_\infty-z/r\|=r+ o(1)$. Since $\|r\widehat{x}_\infty-z\|\geq f(r\widehat{x}_\infty)-f(z)$, by sending $r$ to $+\infty$ we deduce that $f(z)\geq 0$. 
    A similar argument, but exchanging $r\widehat{x}_\infty$ by $r\widehat{y}_\infty$, leads to the fact that $f(z)\leq 0$. Thus, $f(z)=0$. So, $f(\ker(u^*))=\{0\}$.
    Now, to finish the proof it is enough to show that $f(z+t\widehat{x}_\infty)=t$ and $f(z+t\widehat{y}_\infty)=-t$ for all $z\in \ker (u^*)$ and $t\geq 0$. 
    Fix $z\in \ker (u^*)$ and $t>0$. Since $f$ is $1$-Lipschitz, $f(z+t\widehat{x}_\infty)\leq t$
    . As in the previous part of the proof, we have that     \[
    \| r\widehat{x}_\infty-(z+t\widehat{x}_\infty)\|= r-t + o(1) \quad \text{ as } r \longrightarrow + \infty,
    \]
    which allows us to deduce that $f(z+t\widehat{x}_\infty)\geq t$. Thus, $f(z+t\widehat{x}_\infty)=t$.
    The arguments for the case $z+t\widehat{y}_\infty$ are analogous.
    Since any $x \in \R^d$ can be written as $x = z + t \widehat{x}_\infty$ if $u^\ast(x) \geq 0$ or as, $x = z + t \widehat{y}_\infty$ if $u^\ast(x) < 0$ for some $z \in \mathrm{ker}(u^\ast)$ and some $t \geq 0$, we conclude that $f(x) = u^\ast(x)$ for every $x \in \R^d$.
    This completes the proof.
\hfill$\square$

\subsection{Proof of Corollary~\ref{cor.convexaffine}}

Several authors have studied the range of the differential for differentiable maps, with particular emphasis on bump functions defined on arbitrary Banach spaces, for instance, see \cite{AD_2001, AJD_2003, BFL_2002}.
In our case, we will focus on the case when the range of the differential, $D f(\R^d)$, is somehow small.

\medskip

Considering the canonical Euclidean norm of $\R^d$, Proposition~\ref{prop.fini-dim} readily yields the following corollary (this remark was already noticed in \cite{CC_2010}).
\begin{corollary}\label{cor.first example}
    Let $f:\R^d\to\R$ be a differentiable map such that $D f(\R^d)\subset S^{d-1}$. Then $f$ is affine.
\end{corollary}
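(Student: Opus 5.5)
This is a direct specialization of Proposition~\ref{prop.fini-dim}\emph{(ii)} to the Euclidean norm, so the plan is to check that its hypotheses hold and then invoke it.

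We equip $\R^d$ with the Euclidean norm $\|\cdot\|_2$. This norm is differentiable on $\R^d\setminus\{0\}$, with $D\|\cdot\|_2(x)=x/\|x\|_2$. By the Riesz identification its dual norm is again $\|\cdot\|_2$. Under this identification, $Df(x)$ corresponds to the gradient $\nabla f(x)$. The hypothesis $Df(\R^d)\subset S^{d-1}$ therefore says exactly that
\[
\|Df(x)\|_\ast=\|\nabla f(x)\|_2=1\quad\text{for every }x\in\R^d .
\]
Here $S^{d-1}$ denotes the Euclidean unit sphere, read as a subset of the dual.

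Thus $f$ is a differentiable solution of the eikonal equation on $\mathcal U=\R^d$ for a norm that is differentiable off the origin. Proposition~\ref{prop.fini-dim}\emph{(ii)} then gives directly that $f$ is affine.

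There is no real obstacle. The only point needing care is the identification of the dual norm with the Euclidean norm, so that the gradient constraint matches the hypothesis $\|Df\|_\ast\equiv1$ of Proposition~\ref{prop.fini-dim}.

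As an alternative route, Corollary~\ref{corol.regu+linear}\emph{(i)} also applies. Euclidean space is uniformly convex, so Theorem~\ref{thm.straightline} applies, and the Euclidean norm is Gateaux-differentiable at every point of the sphere. Either route gives the conclusion.
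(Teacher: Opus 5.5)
Your proposal is correct and matches the paper's argument: the paper obtains this corollary by applying Proposition~\ref{prop.fini-dim}\emph{(ii)} with the canonical Euclidean norm, which is differentiable off the origin and self-dual. Your alternative route via Theorem~\ref{thm.straightline} and Corollary~\ref{corol.regu+linear}\emph{(i)} is also valid, since Euclidean space is uniformly convex and its norm is differentiable at every point of the unit sphere.
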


In the sequel, and until the end of this section, $\R^d$ will be always equiped with its canonical Euclidean norm.
In the two-dimensional setting, Korobkov, employing a Morse-Sard-like theorem, was able to prove the following. 
\begin{theorem}{\normalfont \label{thm.korobkov}\cite[Theorem 1.1]{Ko_2007}}
    Let $\Omega\subset\R^2$ be a connected open subset of $\R^2$ and let $f\in C^1(\Omega)$. 
    Assume that $\mathrm{int}\,\big(D f(\Omega) \big)=\varnothing$. 
    Then, for every point $z\in\Omega$, there is a straight line $L\ni z$ such that $D f\equiv D f(z)$ on the connected component of $L\cap \Omega$ containing $z$. 
\end{theorem}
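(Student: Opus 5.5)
Since the statement is quoted from \cite[Theorem 1.1]{Ko_2007}, the plan below only sketches how I would try to prove it. The approach is to study level curves of the tilted functions $f_q(x):=f(x)-\langle q,x\rangle$ for vectors $q$ avoiding the gradient range. The goal is to show that, near any fixed $z\in\Omega$, these curves flatten into a straight segment along which $Df$ is frozen.

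\textbf{Step 1 (regular level curves).} Fix $z\in\Omega$ and set $p:=Df(z)$. Since $\mathrm{int}\,Df(\Omega)=\varnothing$, there are $q_n\notin Df(\Omega)$ with $q_n\to p$. For each $n$, $Df_{q_n}=Df-q_n$ never vanishes on $\Omega$. By the implicit function theorem, every level set of $f_{q_n}$ is then a one-dimensional $C^1$ manifold. Let $\gamma_n$ be the component through $z$, parametrised by arclength.

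\textbf{Step 2 (direction along the curve).} Along $\gamma_n$ the unit normal is
\[
\nu_n=\frac{Df-q_n}{|Df-q_n|}.
\]
The continuous map $Df-q_n$ omits a whole neighbourhood of $0$ only in the limit, but it omits $0$ itself for every $n$. I would use a topological-degree argument on small discs $D\subset\Omega$: the argument of $Df-q_n$ along $\partial D$ has zero winding. The aim is to show that if $\nu_n$ turned by a definite angle along $\gamma_n$ uniformly in $n$, then the image of $Df$ would wind around $q_n$ for $q_n$ in an open set. That would put an open set into $Df(\Omega)$, contradicting the hypothesis.

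\textbf{Step 3 (passage to the limit).} The $\gamma_n$ are $1$-Lipschitz curves, so by Arzel\`a--Ascoli a subsequence converges locally uniformly to a curve $\gamma$ through $z$. I expect Step 2 to force $\gamma$ to be a straight segment $L\cap\Omega$ near $z$. On this segment $f-\langle p,x\rangle$ is constant, since it is the limit of constant functions.

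\textbf{Step 4 (gradient constant on $L$).} Next I would show that $Df\equiv p$ on $L$. Near each point of $L$ where $Df\ne p$, the same tilting argument applied at that point would produce a second, transversal family of segments. Continuity of $Df$ would then make $Df$ locally cover an open set, contradicting the hypothesis once more. Finally, a connectedness (continuation) argument extends $Df\equiv p$ to the whole connected component of $L\cap\Omega$ containing $z$.

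\textbf{Main obstacle.} The hard part is Step 2: turning ``$Df$ omits a dense set of values'' into quantitative flatness of level curves, given only $C^1$ regularity and no Morse--Sard theorem. Here I would try first a degree-theoretic comparison between the rotation of $\nu_n$ and the winding of $Df$ around small circles centred at the $q_n$.
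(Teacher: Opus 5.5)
The paper does not prove this statement; it imports it from Korobkov, noting only that his argument relies on a Morse--Sard-like theorem. Taken on its own, your outline has a genuine gap exactly where the content of the theorem lies, and the mechanism you propose for Step 2 cannot work.

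Take $f(x)=x_1^2/2$ on $\R^2$. Then $Df(\R^2)=\R\times\{0\}$ has empty interior. At $z=0$ with $q_n=(0,1/n)\notin Df(\R^2)$, the curve $\gamma_n$ is the parabola $x_2=nx_1^2/2$. Its normal $\nu_n\propto(x_1,-1/n)$ turns by an angle tending to $\pi$ inside any fixed ball around $z$. So a definite turning of $\nu_n$, uniform in $n$, is compatible with the hypothesis and yields no contradiction.

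Winding numbers do not rescue this either. Under the hypothesis, $\deg(Df,D,q)=0$ holds automatically for every disc with $\overline D\subset\Omega$ and every $q\notin Df(\partial D)$, because each component of $\R^2\setminus Df(\partial D)$ meets the complement of the nowhere dense compact set $Df(\overline D)$. But the degree is also zero on every disc centred at $0$ for $f=x_1^3/3+x_2^2/2$, by the symmetry $x_1\mapsto -x_1$. That gradient $(x_1^2,x_2)$ covers a half-plane, and the conclusion fails at $z=0$. There too $q_n=(-1/n,0)\notin Df(\R^2)$, and the level curves of $f_{q_n}$ through $0$ turn by a definite angle; they converge to the cusp $x_2^2=-\tfrac{2}{3}x_1^3$.

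So everything your Steps 1--2 extract is present both in a case where the theorem holds and in one where its conclusion fails. That includes regular level curves for some $q_n\to p$ outside the range, turning of the normals, and the local winding. The hypothesis $\mathrm{int}\,Df(\Omega)=\varnothing$ has to be used in a finer way, and this is where Korobkov brings in his Sard-type theorem.

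Steps 3 and 4 also have problems.

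\textbf{Step 3.} In the parabola example, $\gamma_n$ converges to the half-line $\{0\}\times[0,\infty)$ traversed twice. So $z$ is an endpoint of the limit and the other half of $L$ is never obtained. Which piece you get depends on how $q_n\to p$: $q_n=(0,-1/n)$ gives the lower half-line. More seriously, Step 3 only shows that $f-\langle p,\cdot\rangle$ is constant on the limiting segment. That means $Df-p$ is orthogonal to $L$ along $L$, not that $Df\equiv p$.

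\textbf{Step 4.} ``The same tilting argument'' at a point $w\in L$ again yields only a segment on which $f-\langle Df(w),\cdot\rangle$ is constant. So constancy of $Df$ along segments is being assumed, not proved. Even granting segments through points of $L$ on which $Df$ is constant, all the gradients involved lie on the line $p+\R e^\perp$, where $e$ is the direction of $L$. This configuration therefore produces no open set of gradient values. Whatever excludes it must be a different argument, and your outline does not supply one.
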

As a direct consequence of the above theorem, we can partially generalize Corollary~\ref{cor.first example} in the two-dimensional case.

\begin{corollary}
    Let $f\in C^1(\R^2)$ be such that $\mathrm{int}\,( D f(\R^2))=\varnothing$. 
    Then, there is a straight line $A\subset\R^2$ such that $D f(\R^2)\subset A$.
\end{corollary}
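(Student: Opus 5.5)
Fix $f\in C^1(\R^2)$ with $\mathrm{int}\,(Df(\R^2))=\varnothing$, and apply Theorem~\ref{thm.korobkov} with $\Omega=\R^2$. Since $\R^2$ is connected and every line meets it in the whole line, we get: for every $z\in\R^2$ there is a full straight line $L_z\ni z$ with $Df\equiv Df(z)$ on $L_z$. The aim is to show that the set $K:=Df(\R^2)$ lies in an affine line. If $K$ is a single point, any line through it works. So assume $K$ contains two distinct values.

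The key step is a geometric rigidity statement: if $Df(z_1)\neq Df(z_2)$, then the lines $L_{z_1}$ and $L_{z_2}$ are parallel. If they were not, they would meet at some point $w$. At $w$ the gradient would equal both $Df(z_1)$ and $Df(z_2)$, which is a contradiction. Hence any two lines carrying different gradient values are parallel.

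Now fix $z_1,z_2$ with $p_1:=Df(z_1)\neq p_2:=Df(z_2)$. Then $L_{z_1}$ and $L_{z_2}$ are parallel, with a common unit direction $e$. For an arbitrary $z$, the value $Df(z)$ differs from at least one of $p_1,p_2$. Therefore $L_z$ is parallel to $L_{z_1}$ or to $L_{z_2}$, so its direction is $e$ as well. Consequently every point $z$ satisfies $Df(z+te)=Df(z)$ for all $t\in\R$, meaning $\partial_e(Df)\equiv 0$ in the distributional sense.

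To conclude, write $e^\perp$ for a unit vector orthogonal to $e$. Since $Df$ is constant along each line $z+\R e$, the function $s\mapsto f(z+se)$ has derivative $Df(z)(e)$, which is constant along that line. Hence $f(x+se^\perp+te)=g(s)+c(s)t$, where $c(s)=Df(se^\perp)(e)$ and $g(s)=f(se^\perp)$. Differentiating in $s$ gives $\partial_{e^\perp}f=g'(s)+c'(s)t$. This must be independent of $t$, so $c'\equiv 0$ and $c$ is a constant $c_0$. Thus every gradient has $e$-component $\langle Df,e\rangle=c_0$, and so $K\subset A:=\{p:\langle p,e\rangle=c_0\}$, which is a straight line.

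The only step I expect to need care is the $c'\equiv0$ argument, since $f$ is merely $C^1$. I will handle it without differentiating $c$: the partial derivative $\partial_{e^\perp}f(se^\perp+te)$ exists and equals $g'(s)+t\,c'(s)$ wherever $c'(s)$ exists. Alternatively, I can note directly that $f(se^\perp+te)-f(se^\perp)=c(s)t$ must be $C^1$ in $s$ with derivative independent of $t$, which forces $c$ to be constant.
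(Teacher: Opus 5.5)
Your proof is correct and follows essentially the paper's route. You use Korobkov's lines; intersecting non-parallel lines force equal gradients, so either $Df$ is constant or all lines share one direction $e$; the gradient structure then forces $\langle Df,e\rangle$ to be constant. The only difference is cosmetic: the paper integrates $Df$ around a rectangle and argues by contradiction with an affinely independent triple, whereas you differentiate $c(s)=f(se^\perp+e)-f(se^\perp)$ in $s$ directly, which is the infinitesimal form of the same loop computation. Your second justification, that $c$ is $C^1$ with $c'\equiv 0$, is the one that settles the regularity point.
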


\begin{proof}
    Thanks to Theorem~\ref{thm.korobkov}, for every $x\in \R^2$, there is a unit vector $v_x\in S^1$, such that for every vector $y\in L_x:=\{z\in \R^2:~\langle v_x,z-x\rangle=0\}$, we have that $D  f(y) = D f(x)$. 
    Let us split the analysis into two cases.

\medskip

\textbf{Case 1:} \emph{There are $x_1,x_2\in \R^2$ such that $v_{x_1}\notin\{v_{x_2},-v_{x_2}\}$.}
Therefore, the lines $L_{x_1}$ and $L_{x_2}$ are not parallel. 
This readily implies that they intersect (and therefore $D f(x_1) = D f(x_2)$) and that any other line $L\subset \R^2$ intersects at least one of the lines $\{L_{x_1},L_{x_2}\}$. Thus, $D f$ is constant and $f$ is affine. Hence, $D f(\R^2)$ is a singleton.

\medskip

\textbf{Case 2:}
\emph{For every $x_1,x_2\in \R^2$, $v_{x_1}\in \{v_{x_2},-v_{x_2}\}$.}
Therefore, $L_{x_1}$ and $L_{x_2}$ are parallel for all $x_1,x_2\in \R^2$ and thus, we can assume that there is $v\in S^1$ such that $v=v_x$ for all $x\in \R^2$.
    Let $w\in S^1$ be such that $\langle v,w\rangle =0$.
    Reasoning towards a contradiction, assume that there are $x_1,x_2,x_3\in \R^2$ such that the set $\{D f(x_1),D  f(x_2),D f(x_3)\}$ is affinely independent. 
    After replacing $f$ by $f-\langle D f(x_1),\cdot\rangle$, we can (and shall) assume that $D f(x_1)=0$. 
    Since $\{0,D f(x_2),D f(x_3)\}$ is affinely independent, without loss of generality we assume that $\langle w,D f(x_2)\rangle \neq 0 $. 
    Consider the closed curve $\gamma$ that is an arc-length parametrization of the boundary of the rectangle with vertices $\{x_1, ~x_1+tv,~x_1+tv+w,~x_1+w\}$, where $t\in \R\setminus \{0\}$ is chosen such that $x_1+tv\in L_{x_2}$. Observe that $[x_1,x_1+w]\subset L_{x_1}$ and $[x_1+tv,x_1+tv+w]\subset L_{x_2}$.
    Thus,
    \begin{align*}
      0=\oint_\gamma D f(x) \cdot dx &= \int_0^t  \langle D f(x_1+sv)-D f(x_1+w+(t-s)v),v\rangle ds \\
      &\phantom{ii}+\int_0^1\langle D f(x_1+tv+sw) - D f(x_1+(1-s)w),w\rangle ds\\
      &= \int_0^1 \langle D f(x_1+tv+sw),w\rangle= \langle D f(x_2),w\rangle \neq 0,
    \end{align*}
    which is a contradiction. 
Hence, for any triple $\{x_1,x_2,x_3\}\subset \R^2$, the set $\{D f(x_1),D f(x_2),D f(x_3)\}$ is affinely dependent. In other words, the set $D f(\R^2)$ is contained in a line of $\R^2$.
\end{proof}

In higher dimensions, we do not have Theorem~\ref{thm.korobkov}, but we can use a variant of Proposition~\ref{prop.fini-dim} to deduce similar statements.
Before continuing, let us recall that a convex body $C\subset \R^d$ is a nonempty closed convex set such that $C=\overline{\mathrm{int}\, C}$. 
Also, if $0\in \mathrm{int}\, C$, the Minkowski gauge associated with $C$, $\rho_C:\R^d\to\R$, is the (convex) positively homogeneous map defined by
\[\rho_C(x):=\inf\{\lambda>0:~ x\in \lambda C\},\quad\text{for all }x\in \R^d.\]
Below we use some standard results in convex analysis. We refer the reader to \cite{Rock_70}

\begin{proposition}\label{prop.asym}
    Let $K\subset \R^d$ be a compact convex body such that $0\in \mathrm{int} K$ and $\partial K$ contains no nontrivial segment. Then, if $f:\R^d\to\R$ is differentiable and $D f(\R^d)\subset\partial K$, then $f$ is affine.
\end{proposition}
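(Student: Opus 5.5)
Set $Q:=K^\circ=\{x\in\R^d:~\langle p,x\rangle\le 1 \text{ for all } p\in K\}$. Because $K$ is a compact convex body with $0\in\mathrm{int}\,K$, the set $Q$ is also a compact convex body with $0\in\mathrm{int}\,Q$, and $K=Q^\circ$. The support function $\sigma_Q(p):=\sup_{x\in Q}\langle p,x\rangle$ equals $\rho_K(p)$. Hence $\partial K=\{p:~\sigma_Q(p)=1\}$, and the hypothesis $Df(\R^d)\subset\partial K$ reads $\sigma_Q(Df(x))=1$ for all $x$. This is an eikonal equation for the asymmetric gauge $\rho_Q$. The assumption that $\partial K$ contains no segment is, by standard duality \cite{Rock_70}, equivalent to $\rho_Q$ being differentiable on $\R^d\setminus\{0\}$: a face of $K$ exposed by $x$ is exactly the subdifferential $\partial\rho_Q(x)$. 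So the plan is to rerun the proof of Proposition~\ref{prop.fini-dim}\emph{(ii)} with $\|\cdot\|$ replaced by the gauge $\rho_Q$, tracking where symmetry was used.

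\textbf{Step 1 (Lipschitz bound).} For $x,y\in\R^d$, set $g(t)=f(x+t(y-x))$. Then $g'(t)=\langle Df,y-x\rangle\le\sigma_Q(Df)\,\rho_Q(y-x)=\rho_Q(y-x)$. Hence $f(y)-f(x)\le\rho_Q(y-x)$, which is one-sided.

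\textbf{Step 2 (extremal rays).} On the gauge ball $rQ$, replicate Claim~\ref{claim.maxmin} using compactness instead of uniform convexity. The transfinite construction moves along directions $p\in\partial Q$ with $\langle Df(x_\lambda),p\rangle>1-\varepsilon$, which exist because $\sigma_Q(Df)=1$. Step 1 bounds the increments, so it produces $x_r\in r\partial Q$ with $f(x_r)=r$ (taking $f(0)=0$). By Step 1, $f(t\hat x_r)=t$ for $t\in[0,r]$ where $\hat x_r=x_r/r$, since $f(x_r)-f(t\hat x_r)\le (r-t)$ and $f(t\hat x_r)-f(0)\le t$. For the minimum, apply the same construction to $-f$, whose gradient lies in $-\partial K$, i.e. it uses the gauge $\rho_{-Q}$. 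This gives $y_r\in r\partial(-Q)$ with $f(y_r)=-r$, so $f(t\hat y_r)=-t$ with $\hat y_r\in -\partial Q$, meaning $\rho_Q(-\hat y_r)=1$. Take accumulation points $\hat x_\infty\in\partial Q$ and $\hat y_\infty\in-\partial Q$; as in Claim~\ref{claim.Long_Ray}, the rays $f(t\hat x_\infty)=t$ and $f(t\hat y_\infty)=-t$ hold for $t\ge0$.

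\textbf{Step 3 (conclusion, Crandall-type argument).} Let $u^*:=D\rho_Q(\hat x_\infty)\in\partial K$. For any $z$, Step 1 gives
\[
f(z)\ge f(r\hat x_\infty)-\rho_Q(r\hat x_\infty-z)=r-\rho_Q(r\hat x_\infty-z)\to \langle u^*,z\rangle .
\]
The limit uses differentiability of the positively homogeneous $\rho_Q$ at $\hat x_\infty$, where the one-sided expansion $r\rho_Q(\hat x_\infty-z/r)=r-\langle u^*,z\rangle+o(1)$ suffices. For the upper bound, $f(z)\le f(r\hat y_\infty)+\rho_Q(z-r\hat y_\infty)=-r+\rho_Q(r(-\hat y_\infty)+z)\to\langle v^*,z\rangle$ with $v^*=D\rho_Q(-\hat y_\infty)$. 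Thus $\langle u^*,z\rangle\le f(z)\le\langle v^*,z\rangle$ for all $z$, so the linear functional $v^*-u^*$ is nonnegative on all of $\R^d$ and therefore $u^*=v^*$. Consequently $f=u^*$ is linear, hence affine in general.

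\textbf{Main obstacle.} The main obstacle is Step 2: the transfinite construction and the attainment of extrema with an asymmetric gauge. Attainment is fine by compactness. The care needed is to use the correct one-sided inequalities, where $\rho_Q(v)\ne\rho_Q(-v)$, and to use $-Q$ for the minimum. Step 3 also neatly bypasses the need for $\|\hat x_\infty-\hat y_\infty\|=2$ used in the symmetric proof.
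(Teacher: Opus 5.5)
Your proof is correct and follows essentially the paper's proof. The shared steps are: the polar body $Q=K^\circ$, the one-sided bound $f(y)-f(x)\le\rho_Q(y-x)$, the transfinite construction on $rQ$ with compactness to get maximizers on $r\partial Q$, accumulation points as $r\to\infty$ giving a forward and a backward ray, and Crandall's limiting argument. The differences are minor. You get the backward ray from $-f$ with the gauge of $-Q$ rather than from $\widehat{f}(x)=-f(-x)$. You also deduce $D\rho_Q(\hat x_\infty)=D\rho_Q(-\hat y_\infty)$ from the sandwich $\langle u^*,z\rangle\le f(z)\le\langle v^*,z\rangle$, whereas the paper identifies both gradients with $Df(0)$ by uniqueness of the supporting functional.
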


% \begin{remark}
% The above proposition also yields a local r egularity statement, whose proof follows as in Proposition~\ref{prop.fini-dim}--(i), replacing the norm by the Minkowski gauge.
%More precisely,  let $K\subset \R^d$ be a compact convex body such that $0\in \mathrm{int} K$ and $\partial K$ contains no nontrivial segment.
%Let $\mathcal U \subset \R^d$ be an open subset and $f : \mathcal U \to R$ be a differentiabl function with $D f(\mathcal U) \subset \partial K$.
%Then, $f \in C^{1, \omega}_{\mathrm{loc}}(\mathcal U)$, where $\omega$ is the concave modulus of continuity of $D \rho_{K^\circ}$ on $\partial K^\circ$ and $K^\circ$ is the polar set of $K$.
% \end{remark}

\begin{proof}
The proof of Proposition~\ref{prop.asym} follows the lines of the proofs of Theorem~\ref{thm.straightline} and Proposition~\ref{prop.fini-dim}. Therefore, below we only highlight the main differences that need to be taken into account to prove Proposition~\ref{prop.asym}.

\medskip

Let $Q\subset \R^d$ be the compact convex body defined by \[Q:=\{x\in \R^d:~ \langle y,x\rangle\leq 1\quad\text{for all }y\in K\}.\]
That is, $Q:=K^\circ$, the polar set of $K$.
Since $K$ is a compact convex body with $0\in\mathrm{int}\, K$ and $\partial K$ contains no nontrivial segment, $Q$ is also a compact convex body with $0\in \mathrm{int}\, Q$ and $\partial Q$ is smooth.
Moreover, the Minkowski gauge $\rho_K$ can be regarded as the dual (asymmetric) norm associated with $\rho_Q$ (and vice versa) in the following sense
\[\rho_K(y)=\sup_{q\in Q}\langle q,y\rangle=\max_{q\in \partial Q}\langle q,y\rangle,\quad\text{for all }y\in \R^d.\]

Let $f:\R^d\to\R$ be a differentiable function such that $D f(\R^d)\subset\partial K$. So, 
\begin{equation}\label{eiko_K}
\rho_K( D f(x))=1,\quad\text{for all }x\in \R^d.
\end{equation}

\medskip
\begin{claim}\label{claim.a}
    For every $r>0$ and $\overline{x}\in \R^d$, $\max_{x\in rQ} f(\overline{x}+x)=f(\overline{x})+r$.
\end{claim}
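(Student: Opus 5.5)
Throughout, $\rho_K$ plays the role of the dual norm: the $1$-Lipschitz estimate in the norm is replaced by the one-sided estimate $f(\overline{x}+x)-f(\overline{x})\le \rho_Q(x)$. The plan adapts Claims~\ref{claim.1lip} and~\ref{claim.maxmin} to this asymmetric setting.

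\emph{Step 1: the upper bound $\le$.} For $x\in rQ$ let $g(t):=f(\overline{x}+tx)$ on $[0,1]$. By \eqref{eiko_K},
\[
g'(t)=\langle Df(\overline{x}+tx),x\rangle\le \sup_{y\in K}\langle y,x\rangle .
\]
Since $K=Q^\circ$ by the bipolar theorem and $\rho_Q$ is the support function of $Q^\circ$, this supremum equals $\rho_Q(x)\le r$. Integrating gives $f(\overline{x}+x)\le f(\overline{x})+r$. Since $rQ$ is compact and $f$ is continuous, the maximum over $rQ$ exists.

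\emph{Step 2: the lower bound $\ge$.} Fix $\varepsilon\in(0,1)$ and run the transfinite construction from the proof of Claim~\ref{claim.maxmin}, now inside $rQ$. The only change is that distances are measured with $\rho_Q$. The steps are as follows.
\begin{itemize}
\item \emph{Choice of direction.} At a point $x_\lambda\in \mathrm{int}(rQ)$ we have $\rho_K(Df(\overline{x}+x_\lambda))=1$. Since $\rho_K(y)=\max_{q\in\partial Q}\langle q,y\rangle$, there is $p_\lambda\in\partial Q$ with $\langle Df(\overline{x}+x_\lambda),p_\lambda\rangle>1-\varepsilon$.
\item \emph{Successor step.} Choose $t_\lambda>0$ so that $x_{\lambda+1}:=x_\lambda+t_\lambda p_\lambda\in rQ$ and $f(\overline{x}+x_{\lambda+1})-f(\overline{x}+x_\lambda)>(1-\varepsilon)t_\lambda$, where $t_\lambda=\rho_Q(x_{\lambda+1}-x_\lambda)$.
\item \emph{Limit step.} The values $f(\overline{x}+x_\lambda)$ increase and are bounded by Step~1, so the total $\rho_Q$-length is bounded. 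Since $\rho_Q$ is comparable to the Euclidean norm, the net is Cauchy and its limit is used at limit ordinals. In finite dimensions one could alternatively use ordinary sequences together with compactness.
\item \emph{Termination.} The process stops at a countable ordinal $\Lambda$ with $x_\Lambda\in\partial(rQ)$.
\end{itemize}
By subadditivity and positive homogeneity of $\rho_Q$, the triangle inequality holds for $\rho_Q$ in the forward direction. Hence
\[
f(\overline{x}+x_\Lambda)-f(\overline{x})\ge(1-\varepsilon)\sum_{\lambda<\Lambda}\rho_Q(x_{\lambda+1}-x_\lambda)\ge(1-\varepsilon)\rho_Q(x_\Lambda)=(1-\varepsilon)r .
\]
Letting $\varepsilon\to0$ and using Step~1 gives $\max_{rQ}f(\overline{x}+\cdot)=f(\overline{x})+r$.

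\emph{Main obstacle.} The delicate point is the asymmetry in the triangle inequality. Each step increment must be measured as $\rho_Q(x_{\lambda+1}-x_\lambda)$, in the forward direction, so that the sum dominates $\rho_Q(x_\Lambda-0)$. One must also check that $\rho_Q(p_\lambda)=1$ for $p_\lambda\in\partial Q$. Both facts hold because $\rho_Q$ is a gauge: it is sublinear and equals $1$ exactly on $\partial Q$.

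Uniform convexity is not needed here. It enters only later, through the condition that $\partial K$ contains no nontrivial segment, i.e.\ smoothness of $\partial Q$.
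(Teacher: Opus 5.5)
Your proof is correct and follows the paper's argument. The lower bound is the transfinite ascent of Claim~\ref{claim.maxmin} run inside $rQ$ with increments measured by $\rho_Q$, and the upper bound is the duality $\langle y,q\rangle\le 1$ for $y\in K$, $q\in Q$; the paper applies this only along the ray to a maximizer $y_r\in r\partial Q$, while your global estimate $f(\overline{x}+x)-f(\overline{x})\le\rho_Q(x)$ also shows directly that every maximizer lies on $r\partial Q$, which is what Claim~\ref{claim.b} uses. One small correction to your closing remark: the hypothesis that $\partial K$ contains no nontrivial segment (equivalently, smoothness of $\partial Q$) corresponds to the differentiability of the norm used in Claim~\ref{claim.d} and in the affineness step, not to uniform convexity, which is not needed at all in this finite-dimensional setting.
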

%\emph{For every $r>0$ and $\overline{x}\in \R^d$, $\max_{x\in rQ} f(\overline{x}+x)=f(\overline{x})+r$.} 
\textit{Proof of Claim~\ref{claim.a}.}
Fix $r>0$ and $\varepsilon>0$.
Without loss of generality, assume that $\overline{x}=0$ and $f(0)=0$. 
Due to~\eqref{eiko_K}, for every $x \in \R^d$, there are $q_x\in \partial Q$ and $t_x>0$ such that
\[f(x+t q_x)-f(x)> (1-\varepsilon)t=(1-\varepsilon)\rho_Q((x+tq_x)-x),\quad\text{for all }t\in(0,t_x).\]
Therefore, for any $x\in \mathrm{int}(rQ)$, there is $x'\in rQ$ such that 
\[f(x')-f(x)>(1-\varepsilon)\rho_Q(x'-x).\]
Hence, using transfinite induction as in the proof of Theorem~\ref{thm.straightline}, we can show that
\[
\sup_{x\in rQ} f(x)\geq (1-\varepsilon)r.
\]
After sending $\varepsilon$ to $ 0$, we get $\sup_{x\in rQ} f(x)\geq r$.
Also, by continuity of $f$, the supremum is attained at some $y_r\in r\partial Q$. Set $q_r:=y_r/r$.
Now, since 
\[\langle D f(tq_r),q_r\rangle \leq 1,\quad\text{for all }t\in [0,r],\]
we deduce that $f(y_r)=f(rq_r)\leq r$.
Thus, $f(y_r)=r$.
    \hfill$\Diamond$
\medskip

\begin{claim}
    \label{claim.b}
    For every $\overline{x}\in \R^d$, there is $y_{\overline{x}}\in \partial Q$ such that $f(\overline{x}+ty_{\overline{x}})= f(\overline{x}) + t$ for all $t\geq 0$.
\end{claim}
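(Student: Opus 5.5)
The plan is to combine Claim~\ref{claim.a} with a compactness argument, mirroring Claim~\ref{claim.Long_Ray}. Without loss of generality take $\overline{x}=0$ and $f(0)=0$. The basic tool is an asymmetric Lipschitz bound: for all $x,y\in\R^d$,
\[
f(y)-f(x)\leq \rho_Q(y-x).
\]
This follows from the mean value theorem along $[x,y]$. Indeed, by the duality $\rho_K(v)=\max_{q\in Q}\langle q,v\rangle$ applied with $K$ and $Q$ interchanged (legitimate since $K^{\circ\circ}=K$), we have $\langle Df(z),y-x\rangle\leq \rho_Q(y-x)\,\rho_K(Df(z))=\rho_Q(y-x)$, using~\eqref{eiko_K}. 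This bound was already used implicitly at the end of the proof of Claim~\ref{claim.a}.

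\textbf{Step 1.} For each $r>0$, Claim~\ref{claim.a} gives $q_r\in\partial Q$ with $f(rq_r)=r$. For $t\in[0,r]$ we get $f(tq_r)\leq f(0)+\rho_Q(tq_r)=t$ and $r=f(rq_r)\leq f(tq_r)+\rho_Q((r-t)q_r)=f(tq_r)+r-t$. Together these force
\[
f(tq_r)=t\quad\text{for all } t\in[0,r].
\]

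\textbf{Step 2.} Since $\partial Q$ is compact, choose a sequence $r_n\to\infty$ with $q_{r_n}\to y_0\in\partial Q$. For fixed $t\geq 0$ we have $f(tq_{r_n})=t$ as soon as $r_n\geq t$. Continuity of $f$ then gives $f(ty_0)=t$ for all $t\geq0$, so $y_{\overline{x}}:=y_0$ works. Translating back to general $\overline{x}$ (replace $f$ by $f(\overline{x}+\cdot)-f(\overline{x})$, which still satisfies~\eqref{eiko_K}) proves the claim.

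\textbf{Main obstacle.} The only delicate point is the asymmetric Lipschitz inequality. Because $\rho_Q$ is not symmetric, the order of arguments matters, so one must check that both uses in Step~1 go in the allowed direction. The first use is the increment from $0$ to $tq_r$, bounded by $\rho_Q(tq_r)=t$. The second is the increment from $tq_r$ to $rq_r$, bounded by $\rho_Q((r-t)q_r)=r-t$. Both are forward increments along the ray, so the inequality applies in the required direction and no symmetry of $\rho_Q$ is needed. The rest is the same compactness argument as in Claim~\ref{claim.Long_Ray}.
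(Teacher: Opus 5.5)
Your proposal is correct and follows essentially the same route as the paper: take the points $q_r\in\partial Q$ from Claim~\ref{claim.a}, show $f(tq_r)=t$ for $t\in[0,r]$, and pass to an accumulation point of $\{q_r\}$ using compactness of $\partial Q$ and continuity of $f$. You also spell out two things the paper leaves implicit: the one-sided bound $f(y)-f(x)\leq\rho_Q(y-x)$ and the squeeze along the ray. Your check that both uses of the bound are forward increments is exactly what the asymmetric setting requires.
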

%\textbf{Claim:} 
%\emph{For every $\overline{x}\in \R^d$, there is $y_{\overline{x}}\in \partial Q$ such that $f(\overline{x}+ty_{\overline{x}})= f(\overline{x}) + t$ for all $t\geq 0$.}
\textit{Proof of Claim~\ref{claim.b}.} It follows exactly as in the proof of Proposition~\ref{prop.fini-dim} by using the compactness of $\partial Q$ and the continuity of $f$. Indeed, for $\overline{x}:=0$ and using the notation of the previous claim, we deduce that $f(tq_r)=t$ for all $t\in [0,r]$. Thus, for any $y_0\in \partial Q$, accumulation point of the set $\{q_r\}_{r>0}$ as $r$ goes to infinity, we have that $f(ty_0)=t$ for all $t>0$.
    \hfill$\Diamond$
\medskip

\begin{claim}\label{claim.c}
    For every $\overline{x}\in \R^d$, there is $z_{\overline{x}}\in \partial Q$ such that $f(\overline{x}-tz_{\overline{x}})=f(\overline{x})-t$ for all $t\geq 0$.
\end{claim}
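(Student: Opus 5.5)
Claim~\ref{claim.c} is the mirror image of Claim~\ref{claim.b}, and the plan is to obtain it by applying Claim~\ref{claim.b} to a reflected function. The one thing to watch is that the gauge $\rho_K$ is not symmetric, so passing to $-f$ changes the body.

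Set $g(x):=-f(-x)$. Then $g$ is differentiable and $Dg(x)=Df(-x)\in\partial K$. So $g$ satisfies the same hypotheses as $f$ with the \emph{same} body $K$, and Claim~\ref{claim.b} applies to $g$ directly. The choice matters: the naive reflection $-f$ has $D(-f)\in\partial(-K)$, which would produce directions in $-Q$ rather than the claimed $Q$.

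With $\bar x$ fixed, apply Claim~\ref{claim.b} to $g$ at the point $-\bar x$. This gives $z\in\partial Q$ with
\[
g(-\bar x+tz)=g(-\bar x)+t,\quad\text{for all } t\ge 0.
\]
Unwinding the definition, $g(-\bar x+tz)=-f(\bar x-tz)$ and $g(-\bar x)=-f(\bar x)$. Hence $f(\bar x-tz)=f(\bar x)-t$ for all $t\ge0$, and we take $z_{\bar x}:=z$.

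If one prefers a direct proof, it runs parallel to Claims~\ref{claim.a} and~\ref{claim.b}. First show $\min_{x\in -rQ}f(\bar x+x)=f(\bar x)-r$. The lower bound holds because along $[\bar x,\bar x-rq]$ with $q\in\partial Q$ one has $\langle Df,q\rangle\le\rho_K(Df)\rho_Q(q)=1$, so $f$ decreases by at most $r$. For the matching bound, use the transfinite construction from Claim~\ref{claim.a}, choosing at each step a direction $-q_x$ along which $f$ decreases with slope at least $1-\varepsilon$. Such a direction exists because $\rho_K(Df(x))=1$ is attained as $\langle Df(x),q\rangle$ for some $q\in\partial Q$. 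Then let $r\to\infty$, using compactness of $\partial Q$ and continuity of $f$, exactly as in Claim~\ref{claim.b}.

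The main obstacle is only the asymmetry bookkeeping, that is, choosing the reflection $x\mapsto -f(-x)$ so that the dual body stays $K$ and the resulting direction lies in $\partial Q$. No new analytic difficulty arises.
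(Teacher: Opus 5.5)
Your proof is correct and is essentially the paper's own argument: the paper also sets $\widehat f(x):=-f(-x)$, observes that $\rho_K(D\widehat f(x))=\rho_K(Df(-x))=1$ with the same body $K$, applies Claim~\ref{claim.b} at $-\overline{x}$, and unwinds to get $f(\overline{x}-tz_{\overline{x}})=f(\overline{x})-t$. One aside is slightly off: the reflection $-f$ would also work, since applying Claim~\ref{claim.b} with body $-K$ gives $y\in\partial(-Q)$, and then $z=-y\in\partial Q$ is exactly the required direction. The real advantage of $-f(-\cdot)$ is only that $K$ stays unchanged, so Claim~\ref{claim.b} applies verbatim.
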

%\textbf{Claim:}
%\emph{For every $x\in \R^d$, there is $z_x\in \partial Q$ such that $f(x-tz_x)=f(x)-t$ for all $t\geq 0$.}
\textit{Proof of Claim~\ref{claim.c}.}
Indeed, setting $\widehat{f}(x):=-f(-x)$ we deduce that $\rho_K(D \widehat{f}(x))=\rho_K(D f(-x))=1$, for every $x\in \R^d$. 
Therefore, for all $\overline{x}\in \R^d$ there is $w_{\overline{x}}\in \partial Q$ such that $\widehat{f}(\overline{x}+tw_{\overline{x}})=\widehat{f}(\overline{x})+t$, for all $t\geq 0$. 
Thus, setting $z_{\overline{x}}:=w_{-\overline{x}}$ we have
\[f(\overline{x}-tz_{\overline{x}})=f(\overline{x})-t,\quad\text{for every }t\geq 0.\]
    \hfill$\Diamond$
    
\begin{claim}
    \label{claim.d}
    For every $\overline{x}\in\R^d$, $\langle D f(\overline{x}),y_{\overline{x}}\rangle = \langle D f(\overline{x}),z_{\overline{x}}\rangle =1$ and $D \rho_Q(y_{\overline{x}})= D \rho_Q(z_{\overline{x}})=D f(\overline{x})$.
\end{claim}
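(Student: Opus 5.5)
Write $p:=Df(\overline{x})\in\partial K$, and recall from~\eqref{eiko_K} that $\rho_K(p)=\max_{q\in Q}\langle p,q\rangle=1$. The plan has two steps. First, I will compute the directional derivatives of $f$ at $\overline{x}$ along $y_{\overline{x}}$ and $z_{\overline{x}}$ using Claims~\ref{claim.b} and~\ref{claim.c}. Second, I will convert the resulting equalities into statements about the gradient of $\rho_Q$, using polar duality and the smoothness of $\partial Q$.

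\emph{Step 1 (directional derivatives).} By Claim~\ref{claim.b}, $f(\overline{x}+ty_{\overline{x}})=f(\overline{x})+t$ for all $t\ge0$. Since $f$ is differentiable at $\overline{x}$, the right derivative at $t=0$ gives $\langle p,y_{\overline{x}}\rangle=1$. By Claim~\ref{claim.c}, $f(\overline{x}-tz_{\overline{x}})=f(\overline{x})-t$ for $t\ge0$, and differentiating at $t=0^+$ gives $-\langle p,z_{\overline{x}}\rangle=-1$, so $\langle p,z_{\overline{x}}\rangle=1$. Only one-sided rays are needed here, which suits the asymmetric setting.

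\emph{Step 2 (identifying $D\rho_Q$).} Take $q\in\{y_{\overline{x}},z_{\overline{x}}\}\subset\partial Q$, so $\rho_Q(q)=1$.

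For every $v\in\R^d$ we have $v/\rho_Q(v)\in Q$ when $v\neq0$. Since $p\in K$, this gives $\langle p,v\rangle\le\rho_Q(v)$ for all $v$. This is the dual inequality $\langle p,v\rangle\le\rho_K(p)\rho_Q(v)$ with $\rho_K(p)=1$.

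Consequently $p$ is a subgradient of the convex, positively homogeneous function $\rho_Q$ at $q$: indeed $\rho_Q(v)\ge\langle p,v\rangle=\rho_Q(q)+\langle p,v-q\rangle$, using $\langle p,q\rangle=1=\rho_Q(q)$.

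Next I need differentiability of $\rho_Q$ at $q$. The paper states that $\partial Q$ is smooth because $\partial K$ contains no nontrivial segment. This is the standard polar duality: strict convexity of $K$ is equivalent to smoothness of $K^\circ$ (\cite{Rock_70}). Hence $\rho_Q$ is differentiable on $\R^d\setminus\{0\}$, so its subdifferential at $q$ is the singleton $\{D\rho_Q(q)\}$. Therefore $D\rho_Q(q)=p$.

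Applying this with $q=y_{\overline{x}}$ and $q=z_{\overline{x}}$ gives $D\rho_Q(y_{\overline{x}})=D\rho_Q(z_{\overline{x}})=Df(\overline{x})$.

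\emph{Main obstacle.} The only nontrivial point is justifying that $\rho_Q$ is differentiable on $\partial Q$, i.e.\ that $\partial\rho_Q(q)$ is a singleton. I would argue it directly as follows. The subdifferential is
\[
\partial\rho_Q(q)=\{w\in K:\langle w,q\rangle=1\}.
\]
This set is a face of $K$ lying in $\partial K$. If it contained two distinct points $w_1\neq w_2$, the segment $[w_1,w_2]$ would be contained in it, hence in $\partial K$, contradicting the hypothesis that $\partial K$ contains no nontrivial segment. So the face is a single point, and $\rho_Q$ is differentiable at $q$.
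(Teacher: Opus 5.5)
Your proposal is correct and follows the same route as the paper. The one-sided derivatives along the rays from Claims~\ref{claim.b} and~\ref{claim.c} give $\langle Df(\overline{x}),y_{\overline{x}}\rangle=\langle Df(\overline{x}),z_{\overline{x}}\rangle=1$, and smoothness then makes $Df(\overline{x})$ the unique $w\in K$ with $\langle w,q\rangle=1$, which is $D\rho_Q(q)$. Your exposed-face argument, that $\{w\in K:\langle w,q\rangle=1\}$ is a singleton because $\partial K$ contains no segment, usefully justifies the differentiability of $\rho_Q$ that the paper only asserts.
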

%\textbf{Claim:} 
%\emph{It holds $\langle D f(x),y_x\rangle = \langle D f(x),z_x\rangle =1$ and $D \rho_Q(y_x)= D \rho_Q(z_x)=D f(x)$.}
\textit{Proof of Claim~\ref{claim.d}.}
Indeed, because
\[\langle D f(\overline{x}),y_{\overline{x}}\rangle =\lim_{t\to0^+} \dfrac{f(\overline{x}+ty_{\overline{x}})-f(\overline{x})}{t}=1=\lim_{t\to 0^+}\dfrac{f(\overline{x}-tz_{\overline{x}})-f(\overline{x})}{-t} = \langle D f(\overline{x}),z_{\overline{x}}\rangle.\]
Finally, the smoothness of $\partial Q$ implies that $D f(\overline{x})$ is the only vector $w\in K$ such that $\langle w,y_{\overline{x}}\rangle=1$ (and $\langle w,z_{\overline{x}}\rangle =1$). Thus, $D \rho_Q(y_{\overline{x}})=D \rho_Q(z_{\overline{x}})=D f(\overline{x})$.
 \hfill$\Diamond$
\medskip

\textbf{Conclusion:} \emph{$f$ is affine.} 
Indeed, having both directions $\{y_0,z_0\}\subset \partial Q$ such that ${f(ty_0)=t}$ and ${f(-tz_0)=-t}$ for all $t\geq 0$, that $D\rho_Q(y_0)=D\rho_Q(z_0)=Df(0)$ and the fact that
\[
f(v)-f(w)\leq \rho_Q(v-w),\quad\text{for all }v,w\in \R^d,
\]
we can mimic the proof of Corollary~\ref{corol.regu+linear} $(i)$ to deduce that $f$ must be affine.
\end{proof}

Now we are ready to prove Corollary~\ref{cor.convexaffine}.

\begin{proof}[Proof of Corollary~\ref{cor.convexaffine}]\phantom{tri}\\
$(1)\Longrightarrow(2)$. The proof follows exactly the same idea as in Example~\ref{ex: nonlinear solution}. 
Let $x,y\in \partial C$ be such that the segment $[x,y]\subset \partial C$. Let $\phi:\R\to \R$ be any non--affine differentiable map such that $\phi'(t)\in [-1/2, 1/2]$ and consider $f:\R^d\to\R$ be defined by
\[f(z):=\left\langle\dfrac{x+y}{2},z \right\rangle + \phi(\langle x-y,z\rangle),\quad\text{for all }z\in \R^d\]
It follows that $f$ is differentiable and $D f(\R^d)\subset [x,y]\subset \partial C.$
\medskip \newline 
$ (2)\Longrightarrow (1)$.
Reasoning by  a contrapositive argument, we prove that $\neg (1)\implies \neg (2)$.
    Let $\overline{x}\in \mathrm{int}\,C$ and set $K:=C - \bar x$. Then, $K$ is a compact convex set containing $0$ in its interior. 
    %Let us now consider $Q := K^\circ$, the polar set of $K$.
    % , that is, $Q:=\{x\in \R^d:~ \langle x,y \rangle\leq 1,~\text{for all }y\in K\}.$
   % It follows that $Q$ is a convex set. Moreover, since $\partial K$ does not contain nontrivial segments, we have that $\partial Q$ is smooth. 
    %Notice that $Q$ is a convex set and $\partial Q$ is smooth.
    %Therefore, the (asymmetric) norm on $\R^d$ defined as the Minkowski gauge $\rho_Q$ associated with $Q$ is $C^1$-smooth on $\R^d\setminus\{0\}$.
    %We also have the following duality relation between the Minkowski gauge associated with $K$ and $Q$:
    %\[
    %\rho_{Q, \ast}(z):=\sup\{\langle x,z\rangle:~\rho_Q(x)\leq 1\}=\inf\{\lambda>0:~ z\in \lambda K\} =:\rho_K(z).\]
    %Consider now a differentiable function $f:\R^d\to\R$ such that $D f(\R^d)\subset \partial C$. Then, the function $g:=f-\langle \overline{x},\cdot\rangle$ satisfies that $D g(\R^d)\subset \partial K$.
    Let $f:\R^d\to\R$ be a differentiable function such that $Df(\R^d)\subset \partial C$. Then, the function $g:=f-\langle \overline{x},\cdot\rangle$ is differentiable function that satisfies $Dg(\R^d)\subset \partial K$.
    Thus, Proposition~\ref{prop.asym} implies that $g$ is affine. 
    Hence, $f$ is affine as well.
\end{proof}
\appendix

\section{Appendix}
For the sake of completeness, let us record a standard criterion for a $C^{1, \omega}$--estimate, see also \cite[Proposition 2.6]{AM_2021} and \cite[Theorem 3.3.7]{CS_2004}.
Observe that the smoothness of the norm is not needed in the following result.

\begin{proposition}\label{prop.C1ome}
Let $(X,\|\cdot\|)$ be a Banach space and let $\mathcal U \subset X$ be a nonempty open set. 
Let $\omega:[0,+\infty)\to[0,+\infty)$ be a concave modulus of continuity. Let $f : \mathcal{U} \to \mathbb{R}$ be a G\^ateaux-differentiable function.
Assume that, for every $\bar{x} \in \mathcal{U}$, there are $r = r_{\bar{x}} > 0$ and $C>0$ such that $B(\bar{x},4r) \subset \mathcal{U}$ and
\begin{equation}\label{eq:uni-re}
\left| f(x+h) - f(x) - \langle D_Gf(x),h \rangle \right| \leq C \|h\|\omega(\|h\|)
\end{equation}
for every $x\in B(\bar{x},2r)$ and every $h \in X$ with $\|h\| \leq 2r$.
Then, $f$ is Fr\'echet differentiable in $\mathcal U$ with $Df = D_Gf$ and for every $\bar x \in \mathcal U$, one has
\begin{equation}\label{eq:deri-mol}
\| Df(y) - Df(z) \|_\ast \leq 6C \omega(\|y-z\|)
\end{equation}
for every $y,z \in B(\bar{x},r_{\bar x} /2)$. 
In particular, $f \in C_{\mathrm{loc}}^{1,\omega}(\mathcal U)$.
\end{proposition}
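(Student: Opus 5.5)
We need to prove Proposition \ref{prop.C1ome}: from the uniform first-order estimate \eqref{eq:uni-re} deduce Fréchet differentiability and the modulus estimate \eqref{eq:deri-mol}.

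\emph{Fréchet differentiability.} This is immediate. For fixed $x\in B(\bar x,2r)$, \eqref{eq:uni-re} gives
\[
\sup_{\|h\|\le t}\,\bigl|f(x+h)-f(x)-\langle D_Gf(x),h\rangle\bigr|\le C t\,\omega(t)=o(t),
\]
because $\omega(t)\to 0$ as $t\to0^+$. Hence $f$ is Fréchet differentiable at $x$ and $Df(x)=D_Gf(x)$. Since $\bar x$ is arbitrary, this holds on all of $\mathcal U$.

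\emph{Modulus estimate.} Fix $y,z\in B(\bar x,r/2)$ and set $\delta:=\|y-z\|<r$. We must bound $\langle Df(y)-Df(z),e\rangle$ for every unit vector $e$. The plan is a three-point comparison at scale $\delta$.

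For $h=\delta e$ (if $\delta=0$ there is nothing to prove), expand in two ways:
\[
f(y+h)=f(y)+\langle Df(y),h\rangle+E_1,\qquad f(y+h)=f(z)+\langle Df(z),h+y-z\rangle+E_2 .
\]
Also expand $f(y)=f(z)+\langle Df(z),y-z\rangle+E_3$. Here $E_1$ uses base point $y$ with increment $h$, $E_2$ uses base point $z$ with increment $h+(y-z)$, and $E_3$ uses base point $z$ with increment $y-z$.

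The base points lie in $B(\bar x,2r)$. The increments have norms $\delta$, at most $2\delta<2r$, and $\delta$, so \eqref{eq:uni-re} applies to each. Using the monotonicity of $\omega$ and the concavity bound $\omega(2\delta)\le 2\omega(\delta)$, we get
\[
|E_1|\le C\delta\,\omega(\delta),\qquad |E_2|\le 2C\delta\cdot 2\omega(\delta)=4C\delta\,\omega(\delta),\qquad |E_3|\le C\delta\,\omega(\delta).
\]

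Subtracting the first expansion from the second and substituting the expansion of $f(y)$, every term other than the errors cancels. We obtain
\[
\langle Df(y)-Df(z),h\rangle=E_2-E_1-E_3,
\]
so that
\[
\delta\,|\langle Df(y)-Df(z),e\rangle|\le 6C\delta\,\omega(\delta).
\]
Dividing by $\delta$ and taking the supremum over unit vectors $e$ gives \eqref{eq:deri-mol}.

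Continuity of $Df$ follows from \eqref{eq:deri-mol} and $\omega(0^+)=0$, so $f\in C^{1,\omega}_{\mathrm{loc}}(\mathcal U)$.

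\emph{Main obstacle.} The only delicate point is the bookkeeping: we must check that every base point stays in $B(\bar x,2r)$ and every increment has norm at most $2r$. This is exactly why the hypothesis is stated on $B(\bar x,4r)$ and the conclusion is restricted to $B(\bar x,r/2)$. We also need subadditivity, $\omega(2\delta)\le2\omega(\delta)$; it follows from concavity together with $\omega(0)\ge 0$, since $\omega$ is a modulus with nonnegative values.
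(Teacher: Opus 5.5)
Your proof is correct and is essentially the paper's argument. The paper uses the same three-remainder identity, $s\langle Df(z)-Df(y),v\rangle = R_y(z-y+sv)-R_y(z-y)-R_z(sv)$, which is your $E_2-E_1-E_3$ with the roles of $y$ and $z$ interchanged. It reaches the constant $6C$ through the same concavity bound $\omega(2s)\le 2\omega(s)$.
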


\begin{proof}
Let $\bar x \in \mathcal{U}$ and let $r > 0$ such that $B(\bar x, 4r) \subset \mathcal{U}$ and ~\eqref{eq:uni-re} holds.
Fix $x\in B(\bar{x},2r)$. 
It follows from~\eqref{eq:uni-re} that
\begin{equation*}
\frac{ \left| f(x+h) - f(x) - \langle D_G f(x), h \rangle \right| }{ \|h\| } \leq C\omega(\|h\|).
\end{equation*}
Since $\omega(t)\to 0$ as $t\to 0^+$, we deduce that $f$ is Fr\'echet differentiable at $x$ and $Df(x) = D_Gf(x)$.

\medskip

It remains to check~\eqref{eq:deri-mol}. 
For $x\in B(\bar{x},2r)$ and $\| h \| \leq 2r$, set $R_x(h) := f(x + h) - f(x) - \langle Df(x), h\rangle$. 
Fix $y,z \in B(\bar{x},r/2)$  with $y \neq z$ and $v \in S_X$. 
Set $s := \| y - z \| > 0$.
A direct computation  yields
\begin{equation}\label{eq:re-id}
s \langle Df(z)-Df(y), v \rangle = R_y(z - y + sv) - R_y(z - y) - R_z(sv).
\end{equation}
Observe that $s \leq r$ and so $\| z - y + sv \| \leq 2s \leq 2r$. 
It follows from~\eqref{eq:uni-re} that $|R_x(h)|\leq C\|h\|\omega(\|h\|)$ for every $x \in B(\bar x, 2r)$ and $\| h \| \leq 2r$.
Therefore, 
\begin{align*}
s\left|\langle Df(z) - Df(y), v\rangle\right| &\leq C\|z - y + sv\|\omega(\|z - y + sv\|) + C\|z - y\|\omega(\|z - y\|) + Cs\omega(s) \\
&\leq 2Cs\omega(2s) + 2Cs\omega(s).
\end{align*}
Since $\omega$ is concave and $\omega(0) = 0$, we have $\omega(2s)\leq 2\omega(s)$. 
Consequently,
\begin{equation*}
\left|\langle Df(z)-Df(y),v\rangle\right| \leq 6C\omega(s).
\end{equation*}
Taking the supremum over $v\in S_X$, we conclude that
\[
    \| Df(y) - Df(z) \|_\ast \leq 6C \omega(\| y - z \|) \quad \text{ for every } y, z \in B(\bar x, r/2).
\]
Therefore, $f\in C_{\mathrm{loc}}^{1,\omega}(\mathcal U)$, which completes the proof.

\end{proof}

\vspace{0.5cm}

\noindent Tr\'i Minh L\^E
\medskip 

\noindent  Faculty of Mathematics,
University of Vienna
\newline Oskar-Morgenstern-Platz 1, 1090 Wien
\medskip
\newline\noindent E-mail: \texttt{tri.minh.le@univie.ac.at}
\newline\noindent\texttt{https://sites.google.com/view/tri-minh-le}
\smallskip\newline\noindent
\noindent Research supported by the Austrian \texttt{FWF} grant \texttt{DOI 10.55776/STA223}.

\vspace{0.5cm}

\noindent Sebasti\'an TAPIA-GARC\'IA

\medskip

\noindent Institut f\"{u}r Stochastik und Wirtschaftsmathematik, VADOR E105-04
\newline TU Wien, Wiedner Hauptstra{\ss }e 8, A-1040 Wien\medskip
\newline\noindent E-mail: \texttt{sebastian.tapia.garcia@tuwien.ac.at}\\
\newline\noindent\texttt{https://sites.google.com/view/sebastian-tapia-garcia/}

\medskip

\noindent Research partially supported by the grants: \smallskip\newline 
Austrian Science Fund (FWF P-36344N) (Austria)\\
SABOCPR project ANR-25-CE40-3469-01 and FWF 4368225 (France-Austria)
\end{document}